\newlength{\originalbase}
\newtheorem{theorem}{Theorem}[section]
\newtheorem*{maintheorem}{Main Theorem}
\newtheorem{proposition}[theorem]{Proposition}
\newtheorem{corollary}[theorem]{Corollary}
\newtheorem*{conjecture}{Conjecture}
\newtheorem{question}{Open Question}
\newcommand{\RR}{\mathbb{R}}
\newcommand{\KK}{\mathcal{K}}
\newcommand{\LL}{\mathcal{L}}
\begin{document}

\title{Volume bounds for shadow covering}
\author[C. Chen]{Christina Chen} 
\address{C. Chen, Newton North High School, 
Newton, MA 02460}
\author[T. Khovanova]{Tanya Khovanova} 
\address{T. Khovanova,
Department of Mathematics,
MIT,
77 Massachusetts Avenue,
Cambridge, MA 02139}
\author[D. Klain]{Daniel A. Klain} 
\address{D. Klain, Department of Mathematical Sciences,
University of Massachusetts Lowell,
Lowell, MA 01854, USA}
\email{Daniel\_{}Klain@uml.edu}
\subjclass[2000]{52A20}

\begin{abstract} 
For $n \geq 2$ a construction is given for a large family of compact 
convex sets $K$ and $L$ in $\RR^n$ such that the orthogonal projection 
$L_u$ onto the subspace $u^\perp$ contains a translate of 
the corresponding projection $K_u$ 
for every direction $u$, while the volumes of $K$ and $L$ satisfy $V_n(K) > V_n(L).$

It is subsequently shown that, if the orthogonal projection 
$L_u$ onto the subspace $u^\perp$ contains a translate of $K_u$ 
for every direction $u$, then the set $\frac{n}{n-1} L$ contains
a translate of $K$.  If follows that
$$V_n(K) \leq \left(\frac{n}{n-1}\right)^n V_n(L).$$
In particular, we derive a {\em universal constant bound}
$$V_n(K) \leq 2.942 \, V_n(L),$$
independent of the dimension $n$ of the ambient space.
Related results are obtained for projections onto
subspaces of some fixed intermediate co-dimension.
Open questions and conjectures are also posed.
\end{abstract}


\maketitle

\section{Introduction}

Suppose that $K$ and $L$ are compact convex subsets of 
$n$-dimensional Euclidean space.
For a given fixed dimension $1 \leq k < n$,
suppose that every $k$-dimensional orthogonal projection (shadow) of $K$
can be translated inside the corresponding projection of $L$.  
How are the volumes of $K$ and $L$ related?  And under what 
additional conditions does it follow that $L$ contains a translate of $K$?

Several aspects of this problem have been recently addressed in \cite{Klain-Inscr,Klain-Shadow,Klain-Circ}.
In \cite{Klain-Shadow} it was shown that, despite the assumption
on covering by all $k$-dimensional projections, it may still be the case that $K$ has {\em greater}
volume than $L$.  

It is also shown in \cite{Klain-Shadow} that, if the orthogonal projection $L_u$ of $L$
onto the $(n-1)$-dimensional subspace $u^\perp$ contains a translate of the corresponding
projection $K_u$ for every unit direction $u \in \RR^n$, then 
the volumes must satisfy
$V_n(K) \leq n V_n(L)$, and that $V_n(K) \leq V_n(L)$ if $L$ can be approximated by Blaschke combinations
of convex cylinders in $\RR^n$.  
Earlier results of Ball \cite{Ball-Shad} imply that the covering
condition on projections (as well as the much weaker condition that projections of $K$ have
smaller area) imply that the volume ratio $\tfrac{V_n(K)}{V_n(L)}$ is bounded by a function that grows
with order $\sqrt{n}$ as the dimension $n$ becomes large.  However, all specific examples so far
computed have suggested that the volume ratio is much smaller.  

In this article we prove that
the volume of $K$, while possibly exceeding that of $L$, must still always
satisfy
$$V_n(K) \leq \left(\frac{n}{n-1}\right)^n V_n(L).$$
In particular, there is a {\em universal constant bound}
\begin{align}
V_n(K) \leq 2.942 \, V_n(L),
\label{univconst}
\end{align}
independent of the dimension $n$ of the ambient space 
(see Section~\ref{sec.general}).

This constant bound will be seen as the direct consequence of the main theorem of
this article:
\begin{maintheorem} Let $K$ and $L$ be compact convex sets in $\RR^n$.
Suppose that, for every unit vector $u$, the 
orthogonal projection $L_u$ of $L$ onto the subspace $u^\perp$
contains a translate
of the corresponding projection $K_u$.  Then there exists $x \in \RR^n$ such that

$$K + x \, \subseteq \, \Big(\tfrac{n}{n-1}\Big) L.$$
\label{main}
\end{maintheorem}
We also provide a substantial source of examples of compact convex sets $K$ and $L$
such that the volume ratio is strictly greater than 1, adding to the special case
described in \cite{Klain-Shadow}.

The background material for these results is described in Section~\ref{sec.backg}. 
In Section~\ref{sec.interpolate} we describe a large family of convex bodies 
$K$ and $L$ such that each projection of $L$ contains a translate of the corresponding
projection of $K$, while $K$ has greater volume.

In Section~\ref{sec.simplex} we show that if the body $L$ having larger projections is a {\em simplex}
then there is a translate of $K$ that lies inside a cap body of $L$ having volume $\tfrac{n-1}{n}V_n(L)$.
Section~\ref{sec.general} combines the simplicial case with a containment theorem of Lutwak 
\cite{Lut-contain} (see also \cite[p.~54]{Lincee}) to prove the Main Theorem.  
The universal constant bound~(\ref{univconst}) for volume ratios is
then derived as a corollary.

Section~\ref{sec.dim} extends the results of the previous sections to the case in which $(n-d)$-dimensional
projections of $L$ contain translates of $(n-d)$-dimensional projections of $K$ for some intermediate co-dimension
$1 \leq d \leq n-1$.  In Section~\ref{sec.conclusion} we pose some open questions and conjectures.

This investigation is motivated in part
by the projection theorems of Groemer \cite{Gro-proj}, Hadwiger \cite{Had-proj}, 
and Rogers \cite{Rogers}.  In particular, if two compact convex sets
have translation congruent (or, more generally, homothetic) projections in every linear subspace of some chosen
dimension $k \geq 2$, then the original sets $K$ and $L$ must be translation congruent (or homothetic).
Rogers also proved analogous results for sections of sets with hyperplanes through a base point \cite{Rogers}.
These results then set the stage for more general (and often much more difficult) questions, in which 
the rigid conditions of translation congruence or homothety are 
replaced with weaker conditions, such as containment up to translation, inequalities of measure, etc.

Progress on the general question of when one convex body must contain a translate (or a congruent copy) 
of another appears in the work of
Gardner and Vol\v{c}i\v{c} \cite{Gard-Vol},
Groemer \cite{Gro-proj},
Hadwiger \cite{Had3,Had4,Had-proj,Lincee,Santa}, 
Jung \cite{Bonn2,Webster}, 
Lutwak \cite{Lut-contain}, 
Rogers \cite{Rogers}, 
Soltan \cite{Soltan},
Steinhagen \cite[p. 86]{Bonn2}, 
Zhou \cite{Zhou1,Zhou2}, and others (see also \cite{Gard2006}).
The connection between projections or sections of convex bodies and comparison of their volumes 
also lies at the heart of each of  
two especially notorious inverse problems: the Shephard Problem \cite{Shep}
(solved independently by  
Petty \cite{Petty-shep} and Schneider \cite{Schneider-shep}) and
the even more difficult Busemann-Petty Problem \cite{bus-pet}
(see, for example,
\cite{Ball-BP,Bourgain,Gard-BP,Gard-Busemann,Gard-Kold,Giannopoulos,Koldobsky-Busemann,Larman-Rogers,Lut1988,Papa,Zhang-Busemann-old,Zhang-Busemann}).
A more complete discussion of these and related problems (many of which remain open) can be
found in the comprehensive book by Gardner \cite{Gard2006}.

\section{Background}
\label{sec.backg} 
We will require several concepts and established results from convex geometry in Euclidean space.
Denote $n$-dimensional Euclidean space by $\RR^n$, and let $\KK_n$ denote 
the set of compact convex subsets of $\RR^n$.  The $n$-dimensional
(Euclidean) volume of a set $K \in \KK_n$ will be denoted $V_n(K)$.  If $u$ is a unit vector in
$\RR^n$, denote by $K_u$ the orthogonal projection of a set $K$ onto the subspace $u^\perp$.
The boundary of a compact
convex set $K$ will be denoted by $\partial K$.

Let $h_K: \RR^n \rightarrow \RR$ denote the support function of a compact convex set $K$;
that is,
$$h_K(v) = \max_{x \in K} x \cdot v.$$
The standard separation theorems of convex geometry imply that
the support function $h_K$ characterizes the body $K$; that is, $h_K = h_L$ if and only if $K=L$.
If $K_i$ is a sequence in $\KK_n$, then $K_i \rightarrow K$ in the Hausdorff topology if and only if
$h_{K_i} \rightarrow h_K$ uniformly when restricted to the unit sphere in $\RR^n$.

If $u$ is a unit vector in
$\RR^n$, denote by $K^u$ the support set of $K$ in the direction of $u$; that is,
$$K^u = \{x \in K \; | \; x \cdot u = h_K(u) \}.$$
If $P$ is a convex polytope, then $P^u$ is the face of $P$ having $u$ in its outer normal cone.

Given two
compact convex sets $K, L \in \KK_n$ and $a,b \geq 0$ denote
$$aK + bL = \{ax + by \; | \; x \in K \hbox{ and } y \in L\}.$$
An expression of this form is called a {\em Minkowski combination} or 
{\em Minkowski sum}.  Because $K$,~$L \in \KK_n$, the set $aK + bL \in \KK_n$ as well.  
Convexity of $K$ also implies that $aK + bK = (a+b)K$ for all $a,b \geq 0$.
Support functions are easily shown to satisfy the identity
$h_{aK+bL} = ah_K + bh_L$.  

The volume of a Minkowski combination satisfies
a concavity property called the {\em Brunn-Minkowski inequality}.  Specifically,
for $0 \leq t \leq 1$,
\begin{equation}
V_n((1-t)K + t L)^{1/n} \geq (1 - t)V_n(K)^{1/n} + t V_n(L)^{1/n}.
\label{bmcc}
\end{equation}
If $K$ and $L$ have interior, then equality holds in~(\ref{bmcc}) if and only if $K$ and $L$
are homothetic; that is, 
iff there exist $a \in \RR$ and $x \in \RR^n$ such that $L = aK + x$.    
See, for example, any of 
\cite{Bonn2,Gard-BM,red,Webster}.

The volume $V_n(aK + bL)$ is explicitly given by {\em Steiner's formula:}
\begin{equation}
V_n(aK + bL) = \sum_{i=0}^n \binom{n}{i} \, a^{n-i} b^{i} 
V_{n-i, i}(K, L),
\label{steinform}
\end{equation}
where the {\em mixed volumes} 
$V_{n-i, i}(K, L)$ depend only on $K$ and $L$ and the indices $i$ and $n$.
In particular, if we fix two convex sets $K$ and $L$ then
the function $f(a,b) = V_n(aK + bL)$ is a homogeneous polynomial of degree $n$
in the non-negative variables $a, b$.  

Each mixed volume $V_{n-i, i}(K, L)$ is non-negative, continuous in
the entries $K$ and $L$, and monotonic with respect to set inclusion.
Note also that $V_{n-i, i}(K, K) = V_n(K)$.
If $\psi$ is an affine transformation whose linear component has determinant 
denoted $\det \psi$, then 
$V_{n-i, i}(\psi K, \psi L) = |\det \psi| \, V_{n-i, i}(K, L)$.
It also follows from~(\ref{steinform}) that $V_{n-i,i}(aK,bL) = a^{n-i}b^i V_{n-i,i}(K,L)$ 
for all $a,b \geq 0$.

If $P$ is a polytope, then the mixed volume $V_{n-1, 1}(P, K)$
satisfies the classical ``base-height" formula 
\begin{equation}
V_{n-1, 1}(P, K) = \frac{1}{n} \sum_{u \perp \partial P} h_K(u) V_{n-1}(P^u),
\label{polyvol}
\end{equation}
where this sum is finite, taken over all outer normals $u$ to the {\em facets} 
on the boundary $\partial P$.
These and many other properties of convex bodies and mixed volumes 
are described in detail in each of \cite{Bonn2,red,Webster}.

The identity~(\ref{polyvol}) implies the following useful containment theorem for simplices, due to Lutwak \cite{Lut-contain}.
\begin{theorem} Let $K \in \KK_n$ and let $\triangle$ be an $n$-dimensional simplex.
Then $\triangle$ contains a translate of $K$ if and only if 
$$V_{n-1,1}(\triangle,K) \leq V_n(\triangle).$$ 
\label{lut}
\end{theorem}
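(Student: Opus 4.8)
The plan is to establish both directions of the equivalence, with the forward (necessity) direction being the easy one and the reverse (sufficiency) direction carrying the content. For necessity, suppose $\triangle$ contains a translate of $K$, say $K + x \subseteq \triangle$. Since mixed volumes are translation-invariant in each argument and monotone under set inclusion, $V_{n-1,1}(\triangle, K) = V_{n-1,1}(\triangle, K+x) \leq V_{n-1,1}(\triangle, \triangle) = V_n(\triangle)$, which is the claimed inequality.

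For sufficiency, assume $V_{n-1,1}(\triangle, K) \leq V_n(\triangle)$; I want to produce a translate of $K$ inside $\triangle$. The key idea is to use the ``base-height'' formula~(\ref{polyvol}) applied to the simplex $\triangle$. Let $u_0, u_1, \ldots, u_n$ be the outer unit normals to the $n+1$ facets of $\triangle$. Then
\begin{equation*}
V_{n-1,1}(\triangle, K) = \frac{1}{n} \sum_{i=0}^{n} h_K(u_i) \, V_{n-1}(\triangle^{u_i}),
\end{equation*}
and applying the same formula with $K = \triangle$ gives $V_n(\triangle) = \frac{1}{n}\sum_i h_\triangle(u_i) V_{n-1}(\triangle^{u_i})$. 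A translate $K + x$ lies in $\triangle$ exactly when $h_{K+x}(u_i) = h_K(u_i) + x \cdot u_i \leq h_\triangle(u_i)$ for every $i$ (since a convex body is contained in a polytope iff its support function is dominated at each facet normal). So the task reduces to finding a single vector $x \in \RR^n$ simultaneously satisfying the $n+1$ linear inequalities $x \cdot u_i \leq h_\triangle(u_i) - h_K(u_i) =: c_i$, given only that $\sum_i c_i V_{n-1}(\triangle^{u_i}) \geq 0$.

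The main obstacle — and the crux of the argument — is this finite-dimensional linear feasibility problem: one must show the system $\{x \cdot u_i \leq c_i\}_{i=0}^{n}$ has a solution whenever the weighted sum $\sum_i \lambda_i c_i \geq 0$, where $\lambda_i = V_{n-1}(\triangle^{u_i}) > 0$. The point is that the $n+1$ normals $u_i$ of an $n$-simplex satisfy the Minkowski relation $\sum_i \lambda_i u_i = 0$ (this is the closing-up condition for the facet area vectors of a polytope), and this is essentially the \emph{only} linear dependence among them. By Farkas' lemma (or linear-programming duality), the system $\{x\cdot u_i \le c_i\}$ is infeasible iff there exist $\mu_i \geq 0$, not all zero, with $\sum_i \mu_i u_i = 0$ and $\sum_i \mu_i c_i < 0$; but since the only non-negative null combination of the $u_i$ is a positive multiple of $(\lambda_i)$, infeasibility would force $\sum_i \lambda_i c_i < 0$, contradicting the hypothesis. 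Hence a valid $x$ exists, giving $K + x \subseteq \triangle$. I would carry out the steps in the order: (1) reduce containment to support-function domination at facet normals; (2) rewrite both sides of the hypothesis via~(\ref{polyvol}); (3) invoke the Minkowski relation and Farkas' lemma to solve the linear system. The reader should be warned that step (3) requires knowing that the facet-normal relation $\sum \lambda_i u_i = 0$ spans the space of linear dependencies for a simplex — a standard but essential fact.
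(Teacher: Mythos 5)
Your proof is correct, but the sufficiency direction takes a genuinely different route from the paper. The paper argues by compactness: it takes the maximal $\alpha>0$ with $\alpha K$ translatable into $\triangle$, observes that maximality forces $\alpha K$ to touch every facet (else one could push it inward along the untouched facet's normal), so $h_{\alpha K}(u)=h_\triangle(u)$ at every facet normal, whence the base--height formula~(\ref{polyvol}) gives $\alpha V_{n-1,1}(\triangle,K)=V_n(\triangle)\geq V_{n-1,1}(\triangle,K)$ and hence $\alpha\geq 1$ (with the single-point case of $K$ split off so that one may divide by $V_{n-1,1}(\triangle,K)>0$). You instead reduce containment to the feasibility of the linear system $x\cdot u_i\leq h_\triangle(u_i)-h_K(u_i)$ and resolve it by Farkas' lemma, using that the Minkowski relation $\sum_i V_{n-1}(\triangle^{u_i})\,u_i=0$ is, up to scalar, the \emph{only} linear dependence among the $n+1$ facet normals of a simplex (equivalently, any $n$ of them are linearly independent, since the normal cone at a vertex is full-dimensional). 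Both of your key inputs are sound --- note that the Minkowski relation itself falls out of~(\ref{polyvol}) by translation invariance of $V_{n-1,1}(\triangle,\cdot)$ --- and your duality argument makes transparent exactly where simpliciality is used, while also handling the degenerate point case uniformly; the paper's touching argument is shorter and avoids any discussion of the dependency structure of the normals, at the cost of an explicit compactness/maximality step.
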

\begin{proof} Since mixed volumes are translation invariant and monotonic with respect to inclusion of sets,
it is immediate that $V_{n-1,1}(\triangle,K) \leq V_n(\triangle)$ whenever $\triangle$ contains a translate of $K$.

Conversely, suppose that $V_{n-1,1}(\triangle,K) \leq V_n(\triangle)$.  Evidently $\triangle$ contains a translate of $K$
if $K$ is a single point, so let us assume that $K$ is not a  single point, 
so that $V_{n-1,1}(\triangle,K) >  0$. (See, for example, \cite[p. 277]{red}.)

Let $\alpha > 0$ be maximal such that $\triangle$ contains a translate of $\alpha K$.  
Without loss of generality, assume $\alpha K \subseteq \triangle$.  
If $\alpha K$ does not meet every facet of $\triangle$, 
then $\alpha K$ can be translated (in the direction of the unit normal to the untouched facet) 
into the interior of $\triangle$, violating the maximality of $\alpha$.  
Therefore, $\alpha K$ meets each facet of $\triangle$, so that
$$h_{\alpha K}(u) = h_\triangle(u)$$
for each unit normal $u$ to facets of $\triangle$.  The formula~(\ref{polyvol}) now yields,
$$V_{n-1,1}(\triangle,\alpha K) 
= \frac{1}{n} \sum_{u \perp \partial \triangle} h_{\alpha K}(u) V_{n-1}(\triangle^u)
= \frac{1}{n} \sum_{u \perp \partial \triangle} h_{\triangle}(u) V_{n-1}(\triangle^u) = V_n(\triangle),$$
so that
$$\alpha V_{n-1,1}(\triangle, K) = V_{n-1,1}(\triangle,\alpha K) = V_n(\triangle) \geq  V_{n-1,1}(\triangle,K).$$
Since  $V_{n-1,1}(\triangle,K)>0$, it follows that $\alpha \geq 1$, so that $\triangle$ contains 
a translate of $K$.
\end{proof}

Suppose that $\mathscr{F}$ is a family of compact convex sets in $\RR^n$.
Helly's Theorem \cite{Bonn2,red,Webster} asserts that, if every $n+1$ sets
in $\mathscr{F}$ share a common point, then the entire family shares a common point.
In \cite{Lut-contain} Lutwak used Helly's theorem to
prove the following fundamental criterion for whether a set $L \in \KK_n$ contains 
a translate of another set $K \in \KK_n$.  
\begin{theorem}[Lutwak's Containment Theorem]  Let $K, L \in \KK_n$.  
Suppose that, for every $n$-simplex $\triangle$ such that $L \subseteq \triangle$, 
there is a vector $v_\triangle \in \RR^n$ such that $K + v_\triangle \subseteq \triangle$.  
Then there is a vector $v \in \RR^n$ such that $K +v \subseteq L$.
\label{lcon}
\end{theorem}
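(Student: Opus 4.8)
The plan is to restate the conclusion as the nonemptiness of an intersection of half-spaces and then apply Helly's Theorem, using the hypothesis on circumscribed simplices to verify Helly's finite-intersection condition. The starting point is the standard dictionary between containment and support functions: $h_{K+x}(u) = h_K(u) + x\cdot u$, and $A \subseteq B$ for $A,B \in \KK_n$ exactly when $h_A \le h_B$ on unit vectors. Hence ``$K + x \subseteq L$ for some $x$'' is equivalent to the existence of a point in $\bigcap_u S_u$, where, for each unit vector $u$,
$$S_u = \{ x \in \RR^n : x\cdot u \le h_L(u) - h_K(u) \}$$
is a closed half-space. To show this intersection is nonempty I would fix unit vectors $u_0^\ast,\dots,u_n^\ast$ that positively span $\RR^n$ and set $C = S_{u_0^\ast}\cap\cdots\cap S_{u_n^\ast}$. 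Granting for the moment the claim that \emph{every $n+1$ of the sets $S_u$ have a common point} ($\ast$), the set $C$ is nonempty by ($\ast$), and closed and bounded since its normals positively span $\RR^n$, hence compact. Then for any unit vectors $w_1,\dots,w_{n+1}$, applying ordinary (finite) Helly to the half-spaces $S_{u_0^\ast},\dots,S_{u_n^\ast},S_{w_1},\dots,S_{w_{n+1}}$ --- every $n+1$ of which meet by ($\ast$) --- shows $C\cap S_{w_1}\cap\cdots\cap S_{w_{n+1}}\ne\emptyset$; thus every $n+1$ of the compact sets $\{C\cap S_u\}$ have a common point, and Helly's Theorem as stated in Section~\ref{sec.backg} gives $\bigcap_u(C\cap S_u)\ne\emptyset$, whence $\bigcap_u S_u\ne\emptyset$ and the theorem follows. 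So everything reduces to ($\ast$).

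To prove ($\ast$), fix unit vectors $u_0,\dots,u_n$; I must produce $x$ with $x\cdot u_i \le h_L(u_i) - h_K(u_i)$ for all $i$. The essential case is when $u_0,\dots,u_n$ positively span $\RR^n$ and are in general position. Then
$$\triangle = \{ y \in \RR^n : y\cdot u_i \le h_L(u_i),\ i=0,\dots,n \}$$
is an $n$-simplex; it contains $L$ by definition of the support function, and $h_\triangle(u_i) = h_L(u_i)$ because the facet of $\triangle$ with outer normal $u_i$ lies in $\{ y\cdot u_i = h_L(u_i)\}$. The hypothesis now provides $v$ with $K+v\subseteq\triangle$, and reading off support functions gives $h_K(u_i) + v\cdot u_i \le h_\triangle(u_i) = h_L(u_i)$, i.e. $v\in S_{u_0}\cap\cdots\cap S_{u_n}$. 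For positively spanning tuples that are not in general position, I would perturb the $u_i$ to general-position tuples that still positively span, observe that the resulting simplices (hence the points $v$ they produce) stay uniformly bounded, and take a subsequential limit using continuity of $h_K$ and $h_L$. For tuples that do not positively span, there is a unit vector $f$ with $u_i\cdot f\ge 0$ for all $i$; sending a trial point to infinity along $-f$ makes slack every constraint with $u_i\cdot f>0$, and the remaining constraints involve only the $u_i$ lying in $f^\perp$, so the problem drops to one lower dimension and the same scheme recurses --- the base case being the inequality $\operatorname{width}_K(g)\le\operatorname{width}_L(g)$, itself a consequence of the hypothesis applied to an $n$-simplex circumscribed about $L$ having two facets normal to $\pm g$ that realize the width of $L$ in direction $g$.

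The step I expect to be the main obstacle is precisely this verification of ($\ast$) for \emph{all} $(n+1)$-tuples of directions, not merely the generic positively spanning ones, together with pinning down the correct infinite-family version of Helly's Theorem (closed sets, with one finite subintersection bounded) and its reduction to the compact form stated in the text. The geometric heart --- circumscribe $L$ by a simplex that is tight at the chosen directions, then feed it to the hypothesis --- is transparent in the generic case; the degenerate cases are genuinely routine, handled by perturbation and a short dimensional induction, but they are where the fiddly details live.
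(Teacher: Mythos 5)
First, a point of reference: the paper does not actually prove Theorem~\ref{lcon}; it quotes the result from Lutwak \cite{Lut-contain} and points to \cite[p.~54]{Lincee} for a proof, remarking only that the proof goes via Helly's theorem. Your proposal reconstructs exactly that route: encode ``$K+x\subseteq L$'' as the nonemptiness of the intersection of the half-spaces $S_u=\{x: x\cdot u\le h_L(u)-h_K(u)\}$, compactify by intersecting with finitely many $S_{u^\ast}$ whose normals positively span, and verify the $(n+1)$-wise intersection condition by circumscribing $L$ with the polytope $\bigcap_i\{y\cdot u_i\le h_L(u_i)\}$, which is a simplex tight at the chosen directions and so can be fed to the hypothesis. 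That reduction, the use of finite Helly to pass to the compact family $\{C\cap S_u\}$, and the generic case of $(\ast)$ are all correct and are the heart of the standard argument. (One small simplification: the case ``positively spanning but not in general position'' is vacuous --- if $n+1$ unit vectors positively span $\RR^n$ then every $n$ of them are automatically linearly independent, since a positive dependence $\sum\lambda_iu_i=0$ with all $\lambda_i>0$ forces any vector outside the span of the others to have coefficient zero --- so no perturbation is needed there.)

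The genuine gap is in your treatment of the non-positively-spanning tuples, and specifically in the proposed base case. An $n$-simplex cannot have two facets with outer normals $g$ and $-g$: the $n+1$ facet normals of a simplex positively span $\RR^n$ and any $n$ of them are linearly independent, which rules out an antipodal pair (a triangle has no two parallel sides). So the circumscribed simplex ``having two facets normal to $\pm g$ that realize the width of $L$'' does not exist, and the width inequality $h_K(g)+h_K(-g)\le h_L(g)+h_L(-g)$ must instead be extracted by a limiting argument --- e.g.\ long thin simplices containing $L$ and contained in a slab of width $\operatorname{width}_L(g)+\ep$. The same issue infects the recursion itself: after passing to $f^\perp$, the ``same scheme'' would need the hypothesis for $(n-1)$-simplices circumscribing $L_{f^\perp}$ inside $f^\perp$, which is not what you are given; you must first prove a descent lemma (every circumscribing $(n-1)$-simplex $\sigma$ of $L_{f^\perp}$ contains a translate of $K_{f^\perp}$), and the natural proof of that again goes through approximating the infinite cylinder $\sigma+\RR f$ by genuine $n$-simplices containing $L$ and controlling the projections of the resulting translation vectors, which may themselves be unbounded in the $f$ direction. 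None of this is fatal --- the degenerate case can be closed exactly along these lines --- but as written the argument for $(\ast)$ outside the positively spanning case rests on an object that does not exist, and this is precisely the part of the proof that cannot be waved off as routine.
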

A proof of this containment theorem is also given in \cite[p.~54]{Lincee}.  
We will make use of
this result in Section~\ref{sec.general}.
Variations of Theorem~\ref{lcon} in which circumscribing simplices 
are replaced 
with inscribed simplices or circumscribing cylinders are proved in 
\cite{Klain-Inscr} and \cite{Klain-Circ} respectively.  

Theorem~\ref{lcon} has the following immediate consequence.
\begin{proposition} Let $K \in \KK_n$.  Then $-nK$ contains a translate of $K$.
\label{KnK}
\end{proposition}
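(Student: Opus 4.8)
The plan is to deduce this directly from Lutwak's Containment Theorem (Theorem~\ref{lcon}), taking the ``large'' set there to be $L = -nK$. By that theorem it suffices to show that whenever an $n$-simplex $\triangle$ satisfies $-nK \subseteq \triangle$, some translate of $K$ lies inside $\triangle$.

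The engine of the argument will be a purely affine fact: every $n$-simplex $\triangle$ contains a translate of $-\tfrac{1}{n}\triangle$. To prove this I would write $\triangle = \mathrm{conv}\{p_0,\dots,p_n\}$, set $S = p_0+\cdots+p_n$, and consider the facet centroids $q_j = \tfrac{1}{n}(S-p_j)$ for $j = 0,\dots,n$. Each $q_j$ is a convex combination of the vertices $p_i$, so $\mathrm{conv}\{q_0,\dots,q_n\} \subseteq \triangle$; on the other hand $q_j = \tfrac{S}{n} + \bigl(-\tfrac{1}{n}\bigr)p_j$, so this facet-centroid simplex is precisely the translate $\tfrac{S}{n} + \bigl(-\tfrac{1}{n}\bigr)\triangle$ of $-\tfrac{1}{n}\triangle$.

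Granting this, if $-nK \subseteq \triangle$ then applying the (inclusion-preserving) dilation by the factor $-\tfrac{1}{n}$ gives $K \subseteq -\tfrac{1}{n}\triangle$, and hence $K + \tfrac{S}{n} \subseteq \tfrac{S}{n} - \tfrac{1}{n}\triangle \subseteq \triangle$. Thus every circumscribing $n$-simplex of $-nK$ contains a translate of $K$, and Theorem~\ref{lcon} then yields a vector $v \in \RR^n$ with $K + v \subseteq -nK$. I do not foresee a genuine obstacle here: the one point that requires care is spotting that the facet-centroid simplex is the copy of $-\tfrac{1}{n}\triangle$ to inscribe, after which the conclusion follows from Lutwak's criterion with no estimates at all.
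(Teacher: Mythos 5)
Your proposal is correct and follows essentially the same route as the paper: reduce to circumscribing simplices via Lutwak's Containment Theorem, note that $-nK\subseteq\triangle$ forces $K\subseteq-\tfrac{1}{n}\triangle$, and observe that the facet centroids of $\triangle$ span an inscribed translate of $-\tfrac{1}{n}\triangle$. The only difference is that you spell out the facet-centroid computation explicitly, which the paper leaves as a one-line remark.
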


\begin{proof} Suppose that $\triangle$ is an $n$-dimensional simplex such that 
$-nK \subseteq \triangle$.
It follows that 
$K \subseteq -\tfrac{1}{n}\triangle.$

Meanwhile, since $\triangle$ is a $n$-dimensional simplex, the centroids of its facets 
are the vertices of a translate of $-\frac{1}{n}\triangle$.  It follows that 
$\triangle$ contains a translate of $K$.
Since this holds for every simplex $\triangle$ that contains $-nK$, it follows
from Theorem~\ref{lcon} that $-nK$ contains a translate of $K$.
\end{proof}

\section{Interpolating with a simplex}
\label{sec.interpolate}

If a convex body $K$ in $\RR^n$ has positive volume, then $K$ has at least $n+1$ exposed points \cite[p. 89]{Webster}.
It follows from \cite[Theorem~2.4]{Klain-Inscr}
that there exists a simplex $\triangle$ such that every projection $\triangle_u$ contains a translate of 
the corresponding projection $K_u$, 
while $\triangle$ does {\em not} contain a translate of $K$.  

In general, under these shadow covering conditions, either of the bodies $K$  or $\triangle$ 
may possibly have larger volume.  However,
the next theorem asserts that there is always a convex Minkowski combination of $K$ and $\triangle$
that ``hides behind" $\triangle$, while having {\em larger volume}. (See, for example, Figure~\ref{pbody}.)
\begin{theorem} Suppose that $\triangle$ is an $n$-simplex, and $K$ is a compact convex set in $\RR^n$ such that
the following assertions hold:
\begin{enumerate}
\item[(i)] {\rm Each projection $\triangle_u$ contains a translate of the corresponding projection $K_u$.} \\[-5mm]
\item[(ii)] {\rm The simplex $\triangle$ does not contain a translate of $K$.}
\label{test}
\end{enumerate}
Then there exists $t \in (0,1)$ and a convex body $L = (1-t)K + t\triangle$ such that the following assertions hold:
\begin{enumerate}
\item[(i)$'$]  {\rm Each projection $\triangle_u$ contains a translate of the corresponding projection $L_u$. }\\[-5mm]
\item[(ii)$'$]   $V_n(L) > V_n(\triangle)$.
\end{enumerate}
\label{interp}
\end{theorem}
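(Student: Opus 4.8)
The plan is to exploit the fact that the shadow-covering relation "$\triangle_u$ contains a translate of $K_u$" is preserved under convex Minkowski combination with $\triangle$ itself, so that (i)$'$ holds automatically for every $L = (1-t)K + t\triangle$, and then to use the failure of the containment in hypothesis (ii) together with Lutwak's containment criterion (Theorem~\ref{lut}, or equivalently Theorem~\ref{lcon}) to obtain a strict volume inequality $V_{n-1,1}(\triangle, K) > V_n(\triangle)$, which via Steiner's formula forces $V_n(L) > V_n(\triangle)$ for small $t > 0$.

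First I would verify (i)$'$. For any unit vector $u$, projection onto $u^\perp$ commutes with Minkowski combination, so $L_u = (1-t)K_u + t\triangle_u$. By (i), there is a vector $x_u \in u^\perp$ with $K_u + x_u \subseteq \triangle_u$. Then
$$L_u + (1-t)x_u = (1-t)(K_u + x_u) + t\triangle_u \subseteq (1-t)\triangle_u + t\triangle_u = \triangle_u,$$
using the identity $a\triangle_u + b\triangle_u = (a+b)\triangle_u$ valid for the convex set $\triangle_u$. Hence $\triangle_u$ contains a translate of $L_u$, for every $u$ and every $t \in [0,1]$. This step is routine.

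Next I would extract the key strict inequality. Since $\triangle$ is an $n$-simplex that does \emph{not} contain a translate of $K$, the contrapositive of Theorem~\ref{lut} gives $V_{n-1,1}(\triangle, K) > V_n(\triangle)$. (Here I am using that $K$ cannot be a single point, since a simplex certainly contains a translate of a point; so $V_{n-1,1}(\triangle,K) > 0$ and the inequality is a genuine strict one, not a borderline case.) Now apply Steiner's formula~(\ref{steinform}) to $L = (1-t)K + t\triangle$: writing $f(t) = V_n((1-t)K + t\triangle)$, this is a polynomial in $t$ with $f(0) = V_n(K)$ and, expanding near $t=0$,
$$f(t) = V_n(K) + t\,\binom{n}{1}\big(V_{n-1,1}(\triangle,K) - V_n(K)\big) + O(t^2),$$
since $V_{n-1,1}(K,K) = V_n(K)$. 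Meanwhile $V_n(\triangle)$ is constant in $t$. I want $f(t) > V_n(\triangle)$ for some $t \in (0,1)$.

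The main obstacle — and the place where a little care is needed — is that $f(0) = V_n(K)$ need not exceed $V_n(\triangle)$; indeed $K$ may have smaller volume than $\triangle$, so one cannot simply take $t$ small. The resolution is to use the full strength of the Brunn--Minkowski inequality~(\ref{bmcc}) rather than just the first-order term. Consider instead $g(t) = f(t)^{1/n}$, which by~(\ref{bmcc}) is concave on $[0,1]$ with $g(1) = V_n(\triangle)^{1/n}$. If $g$ were linear we would have $g(t) = (1-t)V_n(K)^{1/n} + tV_n(\triangle)^{1/n} \le V_n(\triangle)^{1/n}$ only when $V_n(K) \le V_n(\triangle)$; but concavity gives $g(t) \ge (1-t)V_n(K)^{1/n} + tV_n(\triangle)^{1/n}$, with \emph{strict} inequality for $t \in (0,1)$ unless $K$ and $\triangle$ are homothetic. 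Homothety is impossible here: if $\triangle = aK + x$ with $a>0$, then $\triangle$ would contain a translate of $K$ (as $a \geq $ the relevant scaling, using (ii) to rule out $a < 1$, or more directly since homothety forces $K$ to be a simplex and one checks the shadow condition then forces containment), contradicting (ii). Therefore $g(t) > (1-t)V_n(K)^{1/n} + tV_n(\triangle)^{1/n}$ strictly; in particular, as $t \to 1^-$ the right-hand side tends to $V_n(\triangle)^{1/n}$, so for $t$ sufficiently close to $1$ we get $g(t) > V_n(\triangle)^{1/n}$, i.e. $V_n(L) > V_n(\triangle)$, establishing (ii)$'$. Combining with the already-verified (i)$'$ completes the proof. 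The one genuinely delicate point to nail down rigorously is the homothety exclusion, since that is what upgrades the Brunn--Minkowski inequality from $\ge$ to the strict $>$ that the argument requires.
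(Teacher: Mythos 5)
Your verification of (i)$'$ is exactly right, and you correctly extract the key inequality $V_{n-1,1}(\triangle,K) > V_n(\triangle)$ from Lutwak's criterion (Theorem~\ref{lut}). But the final step of your argument has a genuine logical gap, and it is precisely the step where that inequality was supposed to be used --- instead you abandon it in favor of a Brunn--Minkowski argument that does not deliver the conclusion. You argue: $g(t) = f(t)^{1/n}$ is concave, strictly above the chord $\ell(t) = (1-t)V_n(K)^{1/n} + tV_n(\triangle)^{1/n}$ for $t\in(0,1)$, and $\ell(t) \to V_n(\triangle)^{1/n}$ as $t \to 1^-$, hence $g(t) > V_n(\triangle)^{1/n}$ for $t$ near $1$. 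This does not follow: the gap $g(t) - \ell(t)$ also tends to $0$ as $t \to 1^-$, so $g(t) > \ell(t)$ gives no lower bound exceeding $g(1)$. Concretely, a function such as $g(t) = \sqrt{1+t}$ on $[0,1]$ is strictly concave, lies strictly above its chord on $(0,1)$, and yet satisfies $g(t) < g(1)$ for every $t < 1$; nothing in your argument rules out this behavior for $f^{1/n}$. When $V_n(K) < V_n(\triangle)$ the chord lies strictly below the level $V_n(\triangle)^{1/n}$ on all of $[0,1)$, so being above the chord is simply not enough.

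What is actually needed is the sign of the derivative of $f$ at the endpoint $t=1$, and this is where your inequality $V_{n-1,1}(\triangle,K) > V_n(\triangle)$ earns its keep. From Steiner's formula, $f'(1) = nV_n(\triangle) - nV_{1,n-1}(K,\triangle) = n\bigl(V_n(\triangle) - V_{n-1,1}(\triangle,K)\bigr) < 0$, so $f$ is strictly decreasing at $t=1$ and therefore $f(t) > f(1) = V_n(\triangle)$ for $t$ slightly less than $1$. (This is the paper's argument; no equality discussion for Brunn--Minkowski and no homothety exclusion is needed at all.) A secondary slip: your first-order expansion of $f$ near $t=0$ uses $V_{n-1,1}(\triangle,K)$, i.e.\ the mixed volume with $\triangle$ repeated $n-1$ times, but the linear coefficient at $t=0$ involves $V_{n-1,1}(K,\triangle)$ ($K$ repeated $n-1$ times), which is a different quantity and is not controlled by Lutwak's criterion --- another sign that the relevant endpoint is $t=1$, not $t=0$.
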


\begin{proof}  Suppose that $t \in [0,1]$, that $L = (1-t)K + t\triangle$, and that $u$ is a unit vector.
We are given in (i) that $\triangle_u$ contains a translate of $K_u$, so that $K_u + w \subseteq \triangle_u$ for some
vector $w \in u^\perp$.  It follows that
\begin{align*}
L_u + (1-t)w 
& = (1-t)K_u + t\triangle_u +  (1-t)w \\
& = (1-t)(K_u + w)+ t\triangle_u \\
& \subseteq (1-t)\triangle_u +  \, t\triangle_u \\
& = \triangle_u,
\end{align*}
so that $\triangle_u$ contains a translate of $L_u$ as well.  This verifies (i)$'$ for all $t \in [0,1]$.

Next, we find a value of $t$ so that (ii)$'$ holds.
For $t \in [0,1]$, define 
$$f(t) = V_n \Big( (1-t)K + t\triangle \Big).$$
Steiner's formula~(\ref{steinform}) implies that $f$ has the polynomial expansion 
$$f(t) = \sum_{i=0}^n \binom{n}{i} V_{i, n-i}(K,\triangle) (1-t)^i t^{n-i},$$
so that
$$f'(t) = \sum_{i=0}^n \binom{n}{i} V_{i, n-i}(K,\triangle) [-i(1-t)^{i-1} t^{n-i} + (n-i)(1-t)^i t^{n-i-1}].$$
It follows that
$$f'(1) = nV_{0, n}(K,\triangle) -nV_ {1,n-1}(K,\triangle) = nV_n(\triangle) - nV_ {1,n-1}(K,\triangle).$$

From the symmetry of mixed volumes, we have $V_ {1,n-1}(K,\triangle) = V_ {n-1,1}(\triangle, K)$.
Since $\triangle$ does not contain a translate of $K$, Theorem~\ref{lut} now implies that 
$$V_ {1,n-1}(K,\triangle) = V_ {n-1,1}(\triangle, K) > V_n(\triangle),$$
so that $f'(1) < 0$.  It follows that $f(t) > f(1)$ for some $t \in (0,1)$.  Setting 
$L = (1-t)K + t\triangle$ for this value of $t$ completes the proof.
\end{proof}

\begin{figure}[ht]
\includegraphics[scale=0.1]{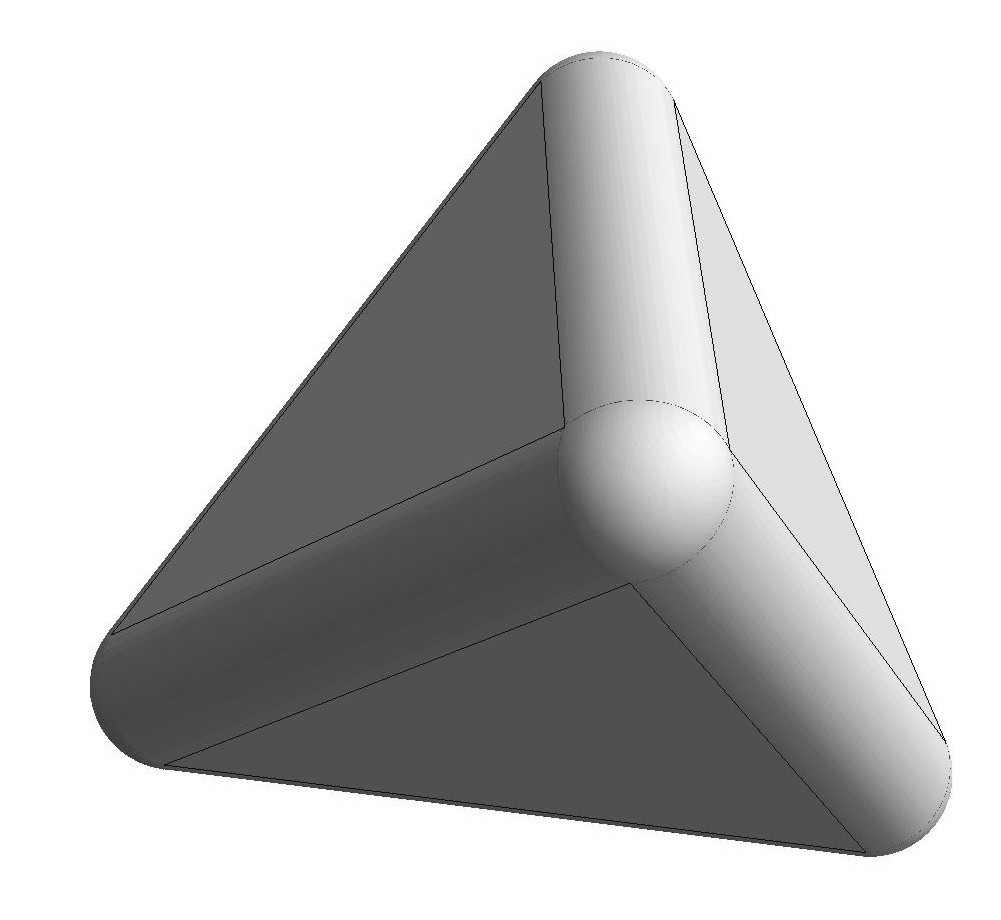} 
\caption{A convex Minkowski combination of a regular tetrahedron with a Euclidean ball.}
\label{pbody}
\end{figure}

Theorem~\ref{interp}, together with \cite[Theorem~2.4]{Klain-Inscr}, implies
the following corollary.
\begin{corollary}  Suppose that $K \in \KK_n$ has positive volume.  Then there exists a simplex
$\triangle$ and $t \in (0,1)$ such that every projection of $\triangle$ contains a translate of
the corresponding projection of the body
$L = (1-t)\triangle + tK,$
while $V_n(L) > V_n(\triangle)$. 
\label{posvol}
\end{corollary}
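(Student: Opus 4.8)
The plan is to obtain the simplex $\triangle$ from an existing construction and then feed the pair $(\triangle, K)$ directly into Theorem~\ref{interp}. First I would recall that a convex body with positive volume has at least $n+1$ exposed points \cite[p.~89]{Webster}; this is precisely the hypothesis needed to invoke \cite[Theorem~2.4]{Klain-Inscr}, which guarantees the existence of an $n$-simplex $\triangle$ such that every projection $\triangle_u$ contains a translate of the corresponding projection $K_u$, while $\triangle$ itself does not contain any translate of $K$.

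With such a $\triangle$ fixed, conditions (i) and (ii) in the hypothesis of Theorem~\ref{interp} hold verbatim. Applying that theorem yields a value $t \in (0,1)$ and a convex body $L = (1-t)K + t\triangle$ for which (i)$'$ and (ii)$'$ hold: every projection $\triangle_u$ contains a translate of $L_u$, and $V_n(L) > V_n(\triangle)$. Since Minkowski addition is commutative, $L = (1-t)\triangle + tK$, which matches the form stated in the corollary, completing the argument.

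The only point demanding any care is checking that \cite[Theorem~2.4]{Klain-Inscr} genuinely applies, i.e., that positivity of $V_n(K)$ supplies enough exposed points to run that construction; once the simplex $\triangle$ is in hand, the remainder is an immediate quotation of Theorem~\ref{interp}. I therefore expect no real obstacle here — the corollary is a direct synthesis of the two cited results — and I would keep the proof to just a few lines.
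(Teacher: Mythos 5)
Your proposal is correct and is exactly the paper's argument: the paper derives this corollary by citing that positive volume gives at least $n+1$ exposed points, invoking \cite[Theorem~2.4]{Klain-Inscr} to produce the simplex $\triangle$, and then applying Theorem~\ref{interp}. The only cosmetic point is that matching $(1-t)K+t\triangle$ to the stated form $(1-t)\triangle + tK$ requires replacing $t$ by $1-t$ rather than mere commutativity, which is immaterial since both lie in $(0,1)$.
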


More generally,
suppose that $K_0, K_1 \in \KK_n$ such that every projection of $K_1$ contains a
translate of the corresponding projection of $K_0$, while $K_1$ does not contain $K_0$.  
If $t \in (0,1)$, the
interpolated body
$$K_t = (1-t)K_0 + tK_1$$
also satisfies these conditions.  Theorem~\ref{interp} motivates the following question:
Under what conditions on $K_0$ and $K_1$ does there exist $t$ so that
\begin{align}
V_n(K_t) > V_n(K_1)?
\label{volineq}
\end{align}
Theorem~\ref{interp} implies there exists such a value $t$ if $K_1$ is a simplex.  
On the other hand, there are large classes of convex bodies $K_1$ for which
no such $t$ exists.  For example, if $K_1$ is a centrally symmetric body then
the Brunn-Minkowski inequality~(\ref{bmcc}) can be used to show that~(\ref{volineq})  will not hold 
(see, for example, \cite{Klain-Shadow}).

More generally, a set $L \in \KK_n$ is called a {\em cylinder body} if $L$
can be expressed as a limit of Blaschke combinations of 
cylinders (see \cite{Klain-Shadow}).  Here a {\em cylinder} refers to
the Minkowski sum in $\RR^n$ of an $(n-1)$-dimensional convex body with a line segment.

It turns out that if $K_1$ is a cylinder body, 
then no $t \in (0,1)$ will satisfy~(\ref{volineq}), as the 
next proposition explains.
\begin{proposition} Suppose that $K_0, K_1 \in \KK_m$ such that every projection of $K_1$ contains a
translate of the corresponding projection of $K_0$, while $K_1$ does not contain $K_0$.  
If $K_1$ is an $(n-1)$-cylinder body, then
$$V_n(K_0) \leq V_n(K_t) \leq V_n(K_1)$$
for all $t \in (0,1)$.
\end{proposition}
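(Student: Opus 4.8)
The plan is to read $V_n(K_t)$ off Steiner's formula, bound its coefficients by $V_n(K_1)$, and then use Brunn--Minkowski for the reverse estimate. Write $L=K_1$; by~(\ref{steinform}),
$$V_n(K_t)=V_n\big((1-t)K_0+tL\big)=\sum_{i=0}^n\binom{n}{i} V_{i,n-i}(K_0,L)\,(1-t)^i t^{\,n-i},$$
so, setting $b_i=V_{i,n-i}(K_0,L)$, we have $b_0=V_n(L)$, $b_n=V_n(K_0)$, and $b_1=V_{1,n-1}(K_0,L)=V_{n-1,1}(L,K_0)$ by symmetry of mixed volumes. Since $\sum_i\binom{n}{i}(1-t)^it^{\,n-i}=1$, the bound $V_n(K_t)\le V_n(L)$ will follow once I show that $b_i\le b_0$ for every $i$.

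The main ingredient is a lemma: \emph{if $L$ is a cylinder body and $K_u$ admits a translate inside $L_u$ for every unit $u$, then $V_{n-1,1}(L,K)\le V_n(L)$.} Since the surface area measure is additive under Blaschke combination and $V_{n-1,1}(\,\cdot\,,K)$ and $V_n$ are weakly continuous in it, I would reduce to a single cylinder $L=M+[0,\ell]e$ with $M\subseteq e^{\perp}$. Its surface area measure is $V_{n-1}(M)(\delta_e+\delta_{-e})$ plus $\ell$ times the surface area measure of $M$ inside $e^{\perp}$, so writing $n\,V_{n-1,1}(L,K)=\int_{S^{n-1}}h_K\,dS_L$ (the integral form of~(\ref{polyvol})) gives
$$n\,V_{n-1,1}(L,K)=V_{n-1}(M)\,w_K(e)+(n-1)\ell\,\overline V_{n-2,1}(M,K_e),$$
with $w_K(e)=h_K(e)+h_K(-e)$ and $\overline V$ the mixed volume computed inside $e^{\perp}$; the analogous identity for $V_n(L)=V_{n-1,1}(L,L)$ has $w_L(e)=\ell$ and $L_e=M$. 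Now $w_K(e)\le w_L(e)=\ell$, because projecting in a direction $u\perp e$ (available since $n\ge2$) preserves $e$-widths and $K_u$ lies up to translation in $L_u$; and $\overline V_{n-2,1}(M,K_e)\le\overline V_{n-2,1}(M,M)=V_{n-1}(M)$ by monotonicity of mixed volumes, since $K_e$ lies up to translation in $M$. Adding the two contributions gives $V_{n-1,1}(L,K)\le\ell V_{n-1}(M)=V_n(L)$. I expect the one genuinely delicate step to be the passage from finite Blaschke combinations to an arbitrary cylinder body: the projection hypothesis is assumed only for the limit $L$, not for the cylinders in an approximating combination, so pushing the single-cylinder inequality through to the limit requires care, and here I would lean on the approximation estimates already worked out in \cite{Klain-Shadow}.

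With the lemma in hand, $b_1\le b_0$. A standard consequence of the Alexandrov--Fenchel inequality is that $i\mapsto b_i=V_{i,n-i}(K_0,L)$ is log-concave, so the ratios $b_i/b_{i-1}$ are non-increasing on the initial range of indices where $b_i>0$ (and $b_1>0$, since $K_0$ is not a point, the lower-dimensional case being trivial). As $b_1/b_0\le1$, all these ratios are $\le1$, hence $b_0\ge b_1\ge\cdots$, so $b_i\le b_0=V_n(K_1)$ for every $i$, and therefore $V_n(K_t)\le V_n(K_1)$. In particular $V_n(K_0)=b_n\le V_n(K_1)$, so by Brunn--Minkowski~(\ref{bmcc}), $V_n(K_t)^{1/n}\ge(1-t)V_n(K_0)^{1/n}+tV_n(K_1)^{1/n}\ge V_n(K_0)^{1/n}$, which gives the remaining inequality.
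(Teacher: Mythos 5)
Your argument is correct in outline, but it takes a genuinely different route from the paper's. The paper's proof is essentially a citation: both upper bounds $V_n(K_0)\leq V_n(K_1)$ and $V_n(K_t)\leq V_n(K_1)$ are quoted directly from \cite[Theorem~6.1]{Klain-Shadow} (implicitly using that the projections of $K_1$ also cover those of $K_t$, by the convexity computation from the proof of Theorem~\ref{interp}(i)$'$), and only the lower bound $V_n(K_0)\leq V_n(K_t)$ is argued, via Brunn--Minkowski exactly as you do. You instead unpack and essentially reprove the cited theorem: the single-cylinder estimate for $V_{n-1,1}(L,K)$ via the width term plus the lateral term, and then the Alexandrov--Fenchel log-concavity of $b_i = V_{i,n-i}(K_0,L)$ to propagate $b_1\leq b_0$ to $b_0\geq b_1\geq\cdots\geq b_n$. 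This buys you something the paper's citation does not give for free: the monotone chain of mixed volumes bounds $V_n(K_t)$ for \emph{all} $t$ in one stroke, with no need to verify separately that $K_t$ hides behind $K_1$. The one step you rightly flag as delicate is also the one place your reduction, as written, is only literally valid when $L$ equals the single cylinder: you compare $\overline V_{n-2,1}(M,K_e)$ with $\overline V_{n-2,1}(M,M)$ using $K_e\subseteq M$ up to translation, which presumes $M=L_e$. For a genuine Blaschke combination $S_L=\sum_j\lambda_j S_{C_j}$ the correct move is to compare $V_{n-1,1}(C_j,K)$ with $V_{n-1,1}(C_j,L)$ term by term, using only the covering hypothesis between $K$ and $L$ (widths in the axis direction, and $K_{e_j}$ inside $L_{e_j}$ up to translation, with $\int h_v\,dS_{M_j}=0$ killing the translation); summing over $j$ and passing to the limit then gives $V_{n-1,1}(L,K)\leq V_{n-1,1}(L,L)=V_n(L)$. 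That is precisely the argument of \cite{Klain-Shadow}, so your deferral lands on solid ground, but it means your proof is not actually more self-contained than the paper's at the crucial point --- it is a more informative re-derivation that still leans on the same reference for the same technical input.
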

\begin{proof} If $K_1$ is $(n-1)$-cylinder body, then 
$$V_n(K_0) \leq V_n(K_1) \quad \hbox{ and } \quad  V_n(K_t) \leq V_n(K_1),$$ 
by \cite[Theorem~6.1]{Klain-Shadow}.
It follows from the Brunn-Minkowski inequality~(\ref{bmcc}) that, for $t \in (0,1)$,
\begin{align*}
V_n(K_t)^{1/n} & = V_n\Big( (1-t)K_0 + tK_1 \Big)^{1/n} \\ 
& \geq (1-t)V_n(K_0)^{1/n} + tV_n(K_1)^{1/n} \\
& \geq V_n(K_0)^{1/n},
\end{align*}
so that $V_n(K_0) \leq V_n(K_t)$ as well.
\end{proof}

\section{When a body can hide behind a simplex}
\label{sec.simplex}

Denote by 
$\Xi$ the
$n$-dimensional simplex having vertices at $\{o, e_1, \ldots, e_n\}$,
where each $e_i$ is the $i$-th coordinate unit vector of $\RR^n$,
and $o$ is the origin.
The simplex $\Xi$ has outer facet unit normals given by
$$\{-e_1, -e_2, \ldots, -e_n, v \},$$
where
$v \in \RR^n$ is the unit vector with coordinates
$$v = \left( \tfrac{1}{\sqrt{n}}, \tfrac{1}{\sqrt{n}}, \ldots, \tfrac{1}{\sqrt{n}} \right).$$
Note also that each $\Xi_{e_i} \subseteq \Xi$, being the 
$(n-1)$-dimensional simplex having vertices $\{o, e_1, \ldots, e_{i-1}, e_{i+1}, \ldots, e_n\}$.

Let $D$ denote the cap body formed by the convex hull of $\Xi$
with the point $p \in \RR^n$ having coordinates
$$p = \left(\tfrac{1}{n-1}, \tfrac{1}{n-1}, \ldots, \tfrac{1}{n-1} \right).$$
See Figure~\ref{capbody}.  
For each $i$ let $w_i \in \RR^n$ denote the vector with coordinates
$$w_i = \left( 1, \ldots, 1, 0, 1 \ldots, 1 \right),$$
where the $0$ appears in the $i$th coordinate.
We can represent $D$ as the intersection of half-spaces
$$D = \left( \bigcap_{i=1}^n \{x \; | \;\; e_i \cdot x \geq 0 \} \right) \cap 
\left( \bigcap_{i=1}^n \{ x \; | \;\; w_i \cdot x \leq 1 \} \right).$$
\begin{figure}[ht]
\hspace{0mm}\includegraphics[scale=0.1]{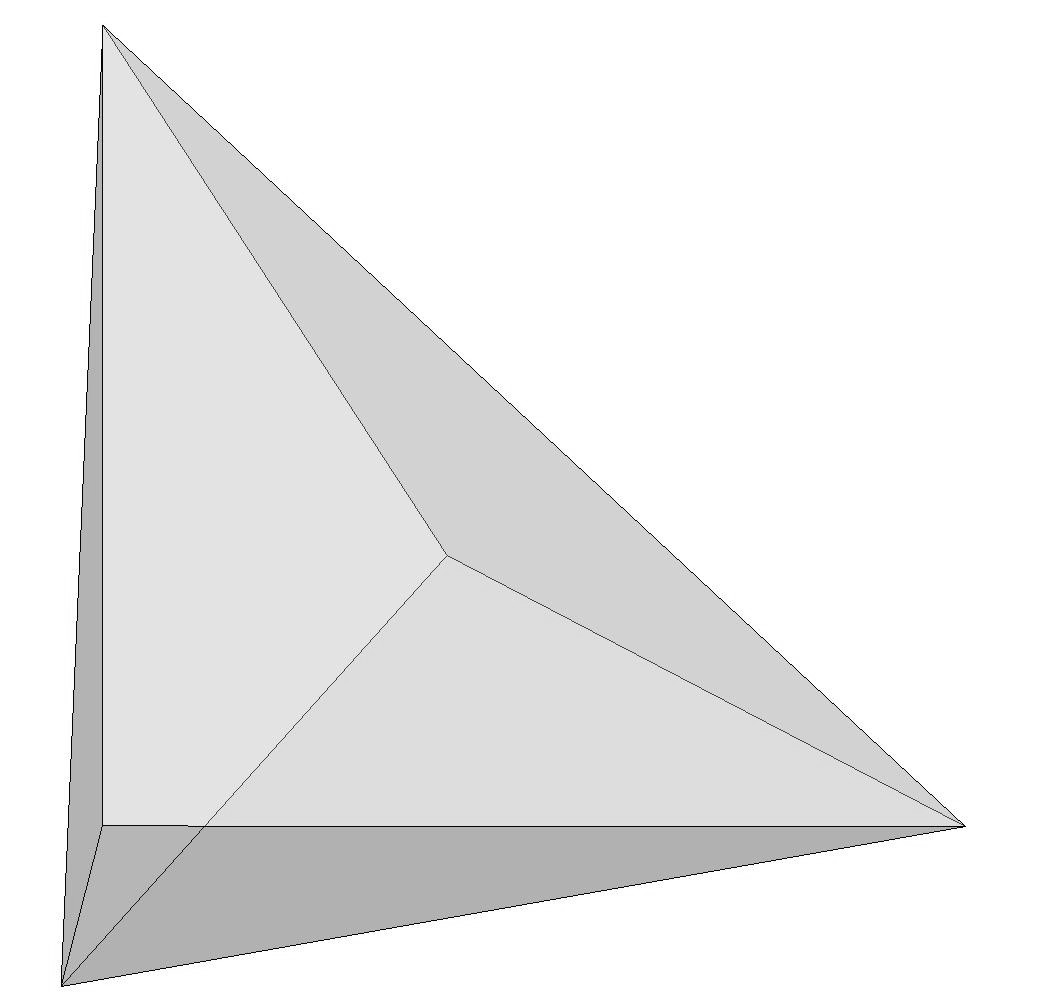}\hspace{5mm} 
\includegraphics[scale=0.12]{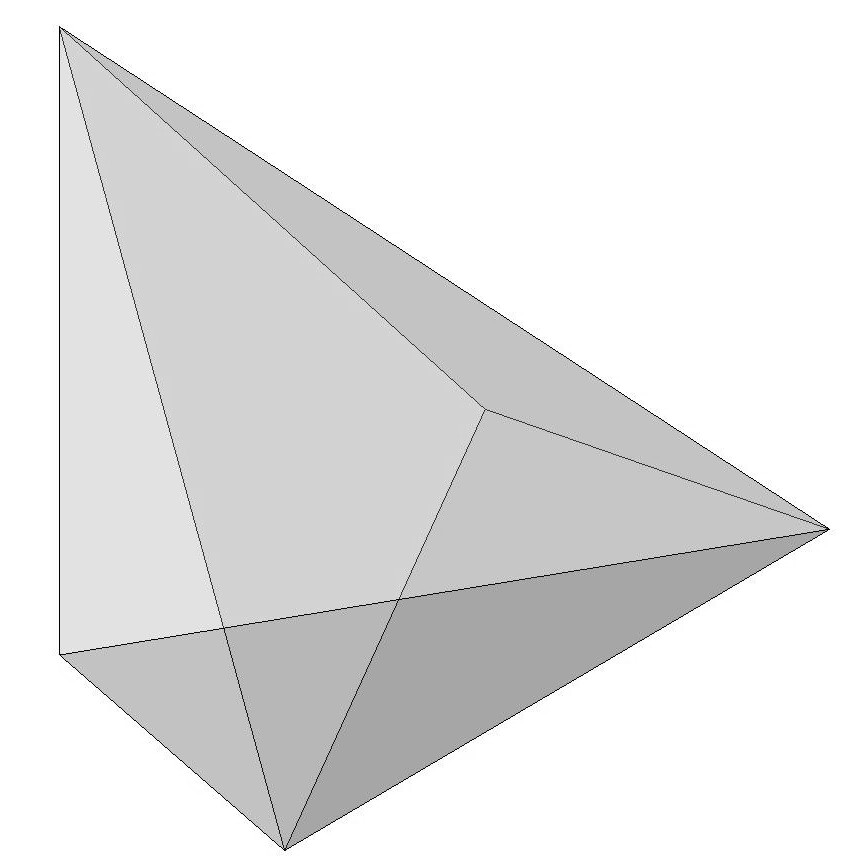}\hspace{9mm} 
\includegraphics[scale=0.14]{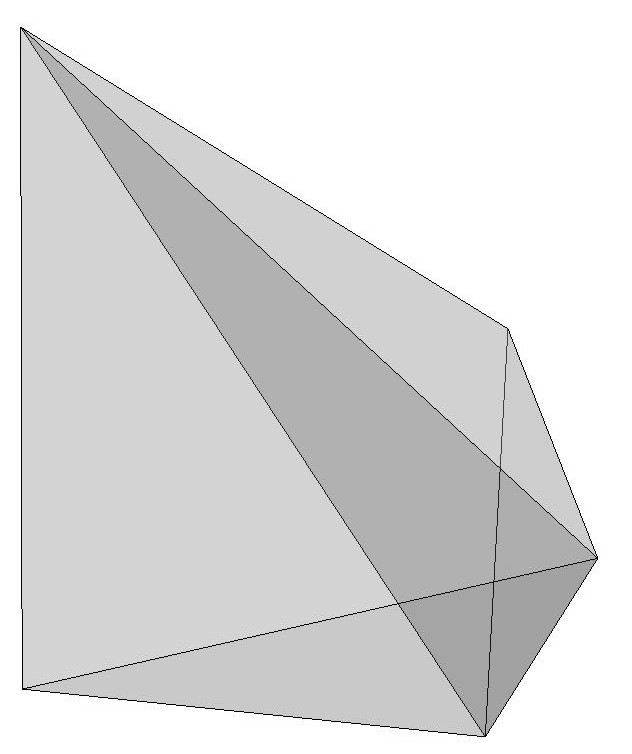} 
\caption{Three views of the cap body $D$ (3-dimensional case).}
\label{capbody}
\end{figure}

For each $i$, let $E_i$ denote the line segment with endpoints at $o$ and $e_i$, and let
$C_i = \Xi_{e_i} + E_i$, a prism (i.e. a cylinder with a simplicial base) that contains $\Xi$.  
We can represent each $C_i$ as the intersection of half-spaces
$$C_i = \left(\bigcap_{i=1}^n \{x \; | \;\; e_i \cdot x \geq 0 \}\right) 
\cap \{x \; | \;\; e_i \cdot x \leq 1 \} \cap \{x \; | \;\; w_i \cdot x \leq 1 \}.$$
It follows that
$$D = \bigcap_{i=1}^n C_i.$$

The next result is fundamental.
\begin{theorem} Let $K \in \KK_n$, and suppose that, 
for every unit vector $u$, the projection $\Xi_u$ contains a translate
of the corresponding projection $K_u$.  Then there exists $x \in \RR^n$ such that 
$$K+x \subseteq D \subseteq \tfrac{n}{n-1}\Xi.$$
\label{simpcover}
\end{theorem}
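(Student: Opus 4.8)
The plan is to exhibit the translation vector explicitly and verify both inclusions directly against the half-space descriptions of $D$ and $\Xi$ recorded above, rather than trying to reconcile separate translates into the prisms $C_i$. The inclusion $D \subseteq \tfrac{n}{n-1}\Xi$ does not involve $K$: since $D = \mathrm{conv}(\Xi \cup \{p\})$ and $\tfrac{n}{n-1}\Xi$ is convex, it suffices to check that both $\Xi$ and $p$ lie in $\tfrac{n}{n-1}\Xi$. The first holds because $\Xi$ contains the origin, so $\tfrac{n-1}{n}\Xi \subseteq \Xi$, i.e. $\Xi \subseteq \tfrac{n}{n-1}\Xi$; the second holds because $\tfrac{n}{n-1}\Xi = \{z : z_j \geq 0,\ \textstyle\sum_j z_j \leq \tfrac{n}{n-1}\}$ and $p = (\tfrac{1}{n-1}, \dots, \tfrac{1}{n-1})$ satisfies $\textstyle\sum_j p_j = \tfrac{n}{n-1}$. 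So the entire content is the existence of $x$ with $K + x \subseteq D$.

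For that I would take
\[
x = \bigl(h_K(-e_1),\, h_K(-e_2),\, \dots,\, h_K(-e_n)\bigr),
\]
and check the defining inequalities $e_i \cdot z \geq 0$ and $w_i \cdot z \leq 1$ of $D$ on points $z = y + x$ with $y \in K$. The inequality $e_i \cdot (y + x) \geq 0$ is immediate, since $y_i \geq \min_{y' \in K} y'_i = -h_K(-e_i)$, so $e_i \cdot (y+x) = y_i + h_K(-e_i) \geq 0$. For $w_i \cdot (y + x) \leq 1$, one has $w_i \cdot x = \sum_{j \neq i} h_K(-e_j)$ and $w_i \cdot y \leq h_K(w_i)$, so it is enough to know that
\[
h_K(w_i) + \sum_{j \neq i} h_K(-e_j) \leq 1
\]
for every $i$; call this inequality $(\star_i)$.

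It remains to derive $(\star_i)$ from the hypothesis, and this is the one place the shadow-covering assumption enters — only through the coordinate directions $u = e_i$. As observed above, $\Xi_{e_i}$ is the $(n-1)$-simplex with vertices $\{o\} \cup \{e_j : j \neq i\}$, so inside $e_i^\perp$ it is cut out by $e_j \cdot z \geq 0$ for $j \neq i$ together with $w_i \cdot z \leq 1$. By hypothesis there is a vector $c \in e_i^\perp$ with $K_{e_i} + c \subseteq \Xi_{e_i}$; testing the first group of inequalities on the points $\bar y + c$ (where $y \in K$ and $\bar y$ is the projection of $y$ to $e_i^\perp$) gives $c_j \geq h_K(-e_j)$ for each $j \neq i$, while testing $w_i \cdot z \leq 1$ gives $\sum_{j \neq i} c_j \leq 1 - h_K(w_i)$. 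Combining, $\sum_{j \neq i} h_K(-e_j) \leq \sum_{j\neq i} c_j \leq 1 - h_K(w_i)$, which is exactly $(\star_i)$.

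The main obstacle, and the reason a softer argument is unavailable, is producing one translate of $K$ that fits into $D = \bigcap_i C_i$ \emph{simultaneously} for all $n$ prisms: one cannot merely slide $K$ into each $C_i$ and invoke Helly, since $n$ convex sets in $\RR^n$ impose no Helly obstruction. The resolution is that the constraints on $x$ forced by $K + x \subseteq C_i$ all take the form $x_i \geq h_K(-e_i)$ and $w_i \cdot x \leq 1 - h_K(w_i)$, and the choice $x_i = h_K(-e_i)$ collapses every one of them to $(\star_i)$ — so once $(\star_i)$ is in hand there is nothing left to coordinate. I would also run the borderline cases ($K$ a point, $K$ a segment of $e_i$-width $1$, and $K = \Xi$) as a check that the vector $x$ and the constant $\tfrac{n}{n-1}$ are correct and sharp.
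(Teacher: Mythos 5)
Your proof is correct and follows essentially the same route as the paper's: both arguments use the covering hypothesis only in the coordinate directions $e_1,\dots,e_n$, both normalize $K$ into the positive orthant (your explicit $x=(h_K(-e_1),\dots,h_K(-e_n))$ is exactly the paper's translation making each $h_K(-e_i)=0$), and both then verify the facet inequalities of $D$ (your $(\star_i)$ is the paper's observation that $K$ fits in each prism $C_i$). The only difference is bookkeeping — direct half-space verification versus the intersection $D=\bigcap_i C_i$ — so there is nothing further to flag.
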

For vectors $v,w \in \RR^n$
denote by $v|w^\perp$ the orthogonal projection of the vector $v$
onto the subspace $w^\perp$.
\begin{proof} Translate $K$ so that each coordinate plane $e_i^\perp$ supports $K$ on its
positive side.  In other words, $K$ is pushed into the corner of the positive orthant of $\RR^n$,
so that each 
$$h_K(-e_i) = 0.$$
Let $y \in K$, with coordinates $y = (y_1, \ldots, y_n)$.  The aforementioned repositioning of $K$
implies that each $y_i \geq 0$.  

We are given that each projection $\Xi_u$ contains a translate of $K_u$.  In particular, it follows that
there exists 
$$x = (0, x_2, \ldots, x_n) \in e_1^\perp$$
such that $K_{e_1} + x \subseteq \Xi_{e_1}$.
For each $i > 1$,
$$h_{K_{e_1}}(-e_i) + x \cdot (-e_i) = h_{K_{e_1}+x}(-e_i) \leq h_\Xi (-e_i) = 0.$$
Therefore, for each $i > 1$, we have 
$$0 \geq h_{K_{e_1}}(-e_i) + x \cdot (-e_i) = h_{K}(-e_i) + x \cdot (-e_i) = 0 - x_i,$$
so that each $x_i \geq 0$.

Since the coordinates of $x$ are non-negative, we have $x \cdot v \geq 0$, so that
$$h_{K_{e_1}}(v|e_1^\perp) = 
h_{K_{e_1}}(v) \leq h_{K_{e_1}}(v) + x \cdot v 
= h_{K_{e_1}+x}(v) \leq h_{\Xi_{e_1}}(v) = h_{\Xi_{e_1}}(v|e_1^\perp),$$
while $h_{K_{e_1}}(-e_i) = 0 = h_{\Xi_{e_1}}(-e_1)$ for each $i> 1.$  In other words, $K_{e_1}$ lies inside
each of the half-spaces of $e_1^\perp$ that define the simplex $\Xi_{e_1}$.  It follows that
$$K_{e_1} \subseteq \Xi_{e_1},$$
and we can use $x=o$.  Moreover, this argument applies in each of the directions $e_1, \ldots, e_n$, so that
$$K_{e_i} \subseteq \Xi_{e_i},$$
for each $i$.

Since 
$K_{e_j} \subseteq \Xi_{e_j}$ for each $j$, the width of $K$ is less than 1 in each 
coordinate direction $e_i$.  Since $K_{e_i} \subseteq \Xi_{e_i}$ as well, it follows that
$K \subseteq C_i$.  In other words,
$$K \subseteq \bigcap_{i=1}^n C_i = D.$$

The vertices of $D$ are $\{o, e_1, \ldots, e_n, p\}$.  Evidently, $o, e_1, \ldots, e_n \in \Xi \subseteq \tfrac{n}{n-1}\Xi$.
Since $p$ has positive coordinates which sum to $\frac{n}{n-1}$, we have $p \in \tfrac{n}{n-1}\Xi$ as well. It follows
that $D \subseteq \tfrac{n}{n-1}\Xi$.
\end{proof}

It immediately follows from Theorem~\ref{simpcover} that, if every projection $\Xi_u$ contains a translate
of the corresponding projection $K_u$, then
\begin{align}
V_n(K) \leq \left(\tfrac{n}{n-1}\right)^n V_n(\Xi).
\label{xi-bound}
\end{align}
Since $\left(\tfrac{n}{n-1}\right)^n$ decreases to $e$ as $n \rightarrow \infty$, this gives a universal
upper bound on the ratio
$$\frac{V_n(K)}{V_n(\Xi)}$$
under the condition of covering projections.  

The next proposition will allow us to generalize these observations from the special case of
covering by the simplex $\Xi$ to covering by an arbitrary $n$-simplex.
\begin{proposition} Let $K, L \in \KK_n$.  Let $\psi: \RR^n \rightarrow \RR^n$ be a
nonsingular linear transformation.  Then $L_u$ contains a translate of $K_u$ for all unit directions $u$
if and only if $(\psi L)_u$ contains a translate of $(\psi K)_u$~for all $u$.
\label{obl}
\end{proposition}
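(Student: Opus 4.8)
The statement is symmetric in $\psi$ and $\psi^{-1}$ (since $\psi$ is nonsingular), so it suffices to prove one implication: assuming $L_u \supseteq K_u + w$ for every $u$, I want to produce, for every $u$, a vector translating $(\psi K)_u$ inside $(\psi L)_u$. The plan is to understand how the projection operator $(\cdot)_u$ transforms under $\psi$. The subtlety is that $\psi$ does not map the subspace $u^\perp$ to $(\psi^{-*}u)^\perp$ in general, but it does map it to \emph{some} hyperplane through the origin, and the orthogonal projection onto that hyperplane differs from $\psi$ composed with orthogonal projection onto $u^\perp$ only by a linear automorphism of the target hyperplane. Concretely, fix a unit vector $u$, and let $u' = \psi^{-*}u / \|\psi^{-*}u\|$ be the unit normal to the hyperplane $\psi(u^\perp)$. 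Let $\pi_u : \RR^n \to u^\perp$ and $\pi_{u'} : \RR^n \to (u')^\perp$ denote the orthogonal projections. I claim there is a nonsingular linear map $\phi : u^\perp \to (u')^\perp$ such that $\pi_{u'} \circ \psi = \phi \circ \pi_u$ as maps $\RR^n \to (u')^\perp$.

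To see the claim, note both sides vanish on $u^\perp{}^\perp$... more carefully: $\pi_{u'}\circ\psi$ and $\pi_u$ are both linear surjections $\RR^n \to (u')^\perp$, $u^\perp$ respectively; the kernel of $\pi_u$ is the line $\RR u$, and I must check $\psi(\RR u) \not\subseteq (u')^\perp$, equivalently $\psi u$ is not orthogonal to $u' = \psi^{-*}u/\|\cdot\|$, i.e. $\langle \psi u, \psi^{-*}u\rangle = \langle u, u\rangle = 1 \neq 0$. Good — so $\pi_{u'}\circ\psi$ has kernel exactly $\RR u = \ker \pi_u$, hence factors as $\phi \circ \pi_u$ for a unique linear isomorphism $\phi : u^\perp \to (u')^\perp$. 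This is the one computation in the argument, and it is the only place the nonsingularity of $\psi$ is used in an essential way; I expect it to be the main (mild) obstacle, mostly a matter of setting up the bookkeeping cleanly.

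Given this, the rest is formal. For a compact convex $M$, its projection $M_u = \pi_u(M)$, and $\psi(M)$ has $(\psi M)_{u'} = \pi_{u'}(\psi M) = \phi(\pi_u(M)) = \phi(M_u)$. Now suppose $K_u + w \subseteq L_u$ with $w \in u^\perp$. Applying the isomorphism $\phi$ and using that $\phi$ is linear (so it commutes with Minkowski translation), $\phi(K_u) + \phi(w) \subseteq \phi(L_u)$, i.e. $(\psi K)_{u'} + \phi(w) \subseteq (\psi L)_{u'}$ with $\phi(w) \in (u')^\perp$. Thus $(\psi L)_{u'}$ contains a translate of $(\psi K)_{u'}$. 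As $u$ ranges over all unit vectors, $u' = \psi^{-*}u/\|\psi^{-*}u\|$ also ranges over all unit vectors (because $\psi^{-*}$ is a bijection of $\RR^n$ fixing the origin), so the covering condition holds for $\psi K$ and $\psi L$ in every direction. The converse implication follows by applying what we have just proved to $\psi^{-1}$ in place of $\psi$ and the bodies $\psi K$, $\psi L$ in place of $K$, $L$. $\qed$
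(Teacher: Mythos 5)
Your overall strategy---find, for each $u$, a direction $w$ and a linear isomorphism $\phi\colon u^\perp\to w^\perp$ with $(\psi M)_w=\phi(M_u)$ for every body $M$---is a good one, but you have chosen the wrong direction $w$, and the step meant to justify the factorization proves the opposite of what you need. You take $u'$ proportional to $\psi^{-*}u$, the normal to the hyperplane $\psi(u^\perp)$, and claim $\pi_{u'}\circ\psi=\phi\circ\pi_u$. For such a factorization to exist, the kernel $\RR u$ of $\pi_u$ must be \emph{contained in} $\ker(\pi_{u'}\circ\psi)=\psi^{-1}(\RR u')$, i.e.\ one needs $\psi u\in\RR\,\psi^{-*}u$, which fails for general nonsingular $\psi$ (it holds only when $\psi^{*}\psi$ preserves the line $\RR u$). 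Your computation $\langle\psi u,\psi^{-*}u\rangle=1\neq 0$ shows that $\psi u$ has \emph{nonzero} image under $\pi_{u'}$, i.e.\ that $\RR u$ meets $\ker(\pi_{u'}\circ\psi)$ only at the origin---the reverse of the required containment. And the identity $(\psi M)_{u'}=\phi(M_u)$ is genuinely false, not merely unproved: in $\RR^2$ take the shear $\psi(x_1,x_2)=(x_1+x_2,x_2)$ and $u=e_1$, and compare $M$ equal to the unit disk with $M$ equal to the segment $[-e_2,e_2]$. Both have the same projection onto $e_1^\perp$, but $\psi$ carries them to an ellipse and to the segment from $(-1,-1)$ to $(1,1)$, whose orthogonal projections onto $(u')^\perp=\RR(1,1)$ are segments of lengths $\sqrt{10}$ and $2\sqrt{2}$ respectively; so no map $\phi$, linear or otherwise, can realize your identity for this $u'$.

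The repair is to project onto $(\psi u)^\perp$ rather than onto $\psi(u^\perp)$: setting $w=\psi u/|\psi u|$, one has $\ker(\pi_w\circ\psi)=\psi^{-1}(\RR\,\psi u)=\RR u=\ker\pi_u$, so $\pi_w\circ\psi$ does factor as $\phi\circ\pi_u$ for a unique linear isomorphism $\phi\colon u^\perp\to w^\perp$, whence $(\psi M)_w=\phi(M_u)$ for all $M$. The rest of your argument then goes through verbatim, since $u\mapsto\psi u/|\psi u|$ is still a bijection of the unit sphere and the converse still follows by applying the result to $\psi^{-1}$. This corrected version is essentially the paper's proof in different clothing: the paper observes that $\psi$ maps the family of lines parallel to $u$ meeting $K$ onto the family of lines parallel to $\psi u$ meeting $\psi K$, and that containment of translated shadows is equivalent to containment of translated line families---which is exactly the statement that the relevant shadow of $\psi K$ lives over the direction $\psi u$, not over the normal to $\psi(u^\perp)$.
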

This proposition implies that nothing is gained (or lost) by allowing
more general (possibly non-orthogonal) linear projections.

\begin{proof} 
For $S \subseteq \RR^n$ and a nonzero vector $u$, let $\LL_S(u)$ denote the set of straight lines in $\RR^n$
parallel to $u$ and meeting the set $S$.
The projection $L_u$ contains a translate $K_u$ for each unit vector $u$ if and only if,
for each $u$, there exists $v_u$ such that
\begin{equation}
\LL_{K+v_u} (u) \subseteq \LL_L(u).
\label{hum}
\end{equation}
But $\LL_{K+v_u}(u) = \LL_{K}(u) + v_u$ 
and $\psi \LL_{K}(u) = \LL_{\psi K}(\psi u)$.
It follows that~(\ref{hum}) holds if and only if
$\LL_{K}(u) + v_u \subseteq \LL_L(u)$,
which, in turn, holds if and only if
$$\LL_{\psi K}( \psi u) + \psi v_u \subseteq \LL_{\psi L}(\psi u) \;\;\; \hbox{ for all unit } u.$$
Set 
$$\tilde{u} = \frac{\psi u}{|\psi u|} \;\;\; \hbox{ and } \;\;\; \tilde{v} = \psi v_u.$$
The relation~(\ref{hum}) now holds if and only if,
for each $\tilde{u}$, there exists $\tilde{v}$ such that
$$\LL_{\psi K}(\tilde{u}) + \tilde{v} \subseteq \LL_{\psi L}(\tilde{u}),$$
which holds if and only if 
$(\psi L)_{\tilde{u}}$ contains a translate of $(\psi K)_{\tilde{u}}$ for all $\tilde{u}$.
\end{proof}

Proposition~\ref{obl} implies that the projection covering relation is preserved by 
invertible affine transformations.  Since every $n$-dimensional simplex can be expressed as the affine image of
the simplex $\Xi$, the volume inequality~(\ref{xi-bound}) continues to hold when $\Xi$ is replaced by {\em any}
simplex whose projections cover those of $K$.  In the next section we will generalize this observation still further to
an even larger class of sets.  However, the volume bound~(\ref{xi-bound}) can also be strengthened for the special case
in which the projections of $K$ are covered by those
of a simplex.
\begin{theorem} Let $K \in \KK_n$ and let $T$ denote an 
$n$-dimensional simplex such that, for every unit vector $u$, the projection $T_u$ contains a translate
of the corresponding projection $K_u$.  Then
$$V_n(K) \leq \tfrac{n}{n-1} V_n(T).$$
\label{simpcover2}
\end{theorem}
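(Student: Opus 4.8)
The plan is to reduce the statement about an arbitrary simplex $T$ to the concrete case $T = \Xi$ already handled in Theorem~\ref{simpcover}, and then extract the sharper volume bound from the cap body $D$ rather than from the cruder containment $D \subseteq \tfrac{n}{n-1}\Xi$. First I would invoke Proposition~\ref{obl}: since every $n$-dimensional simplex is an invertible affine image of $\Xi$, write $T = \psi\Xi$ for a nonsingular affine map $\psi$, and note that $T_u$ containing a translate of $K_u$ for all $u$ is equivalent to $\Xi_u$ containing a translate of $(\psi^{-1}K)_u$ for all $u$. Applying Theorem~\ref{simpcover} to $\psi^{-1}K$ yields a translate of $\psi^{-1}K$ contained in $D$, hence (applying $\psi$, which preserves volume ratios) a translate of $K$ contained in $\psi D$, a cap body of $T$. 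Since volume ratios are affine invariants, it suffices to prove $V_n(K) \leq \tfrac{n}{n-1}V_n(\Xi)$ in the normalized setting, i.e.\ to show $V_n(D) = \tfrac{n}{n-1}V_n(\Xi)$.

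So the computational heart of the argument is evaluating $V_n(D)$, where $D = \mathrm{conv}(\Xi \cup \{p\})$ with $p = (\tfrac{1}{n-1},\dots,\tfrac{1}{n-1})$. The cap body $D$ is the union of $\Xi$ and $n$ congruent ``cap'' simplices, one over each facet $\Xi^{-e_i}$ of $\Xi$ adjacent to the origin, each being the convex hull of that facet with $p$. (The facet of $\Xi$ with outer normal $v$ is the one ``facing'' $p$, so no cap is added there; this matches the half-space description $D = \bigcap_i C_i$, which only cuts back the $w_i \cdot x \le 1$ constraints and leaves the $e_i \cdot x \ge 0$ and $v$-facet constraints of $\Xi$ untouched.) Each cap over $\Xi^{-e_i}$ is a simplex whose base $\Xi^{-e_i}$ has $(n-1)$-volume $\tfrac{1}{(n-1)!}$ (it is a unit coordinate simplex in an $(n-1)$-dimensional coordinate plane) and whose apex $p$ lies at distance $\tfrac{1}{n-1}$ from that plane $e_i^\perp$; hence each cap has volume $\tfrac{1}{n}\cdot\tfrac{1}{(n-1)!}\cdot\tfrac{1}{n-1} = \tfrac{1}{n!\,(n-1)}$. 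Since $V_n(\Xi) = \tfrac{1}{n!}$, summing gives
$$
V_n(D) = V_n(\Xi) + n\cdot\frac{1}{n!\,(n-1)} = \frac{1}{n!} + \frac{1}{(n-1)!\,(n-1)} = \frac{1}{n!}\left(1 + \frac{n}{n-1}\right),
$$
which is not yet what I want, so I must be more careful: the caps overlap $\Xi$ only along their shared facets (measure zero), and they do not overlap each other in positive measure either, since each lives in a distinct half-space region $\{e_i \cdot x \le 0\}$... no — $p$ has positive coordinates, so the caps actually protrude into the region where $x$ has all coordinates positive. The correct decomposition: $D \setminus \Xi$ consists of points $x$ with all $e_i\cdot x \ge 0$, with $w_i \cdot x \le 1$ for all $i$, but $v \cdot x$ exceeding the $\Xi$-bound $\tfrac{1}{\sqrt n}$; reorganize as a cone from $p$ over the ``old'' facets of $\Xi$ visible from $p$. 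I would instead compute $V_n(D)$ directly as $\tfrac1n\sum_{u\perp\partial D} h_D(u)V_{n-1}(D^u)$ via formula~(\ref{polyvol}), or simply integrate; the cleanest route is to observe $D = \bigcap C_i$ and compute $V_n(C_i \cap \Xi^c)$-type corrections by inclusion–exclusion, or to directly set up the integral $V_n(D) = \int_{\substack{x\ge 0\\ w_i\cdot x\le 1}} dx$ and change variables.

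The main obstacle is precisely this volume computation: getting $V_n(D) = \tfrac{n-1}{n}\cdot\tfrac{n}{n-1}\cdot$... let me state the target cleanly. We want $V_n(K) \le \tfrac{n}{n-1}V_n(T)$; since $K+x\subseteq \psi D$ and $V_n(\psi D)/V_n(T) = V_n(D)/V_n(\Xi)$, we need $V_n(D) = \tfrac{n}{n-1}V_n(\Xi)$, i.e.\ $V_n(D) = \tfrac{n}{n-1}\cdot\tfrac{1}{n!}$. Equivalently $V_n(D)/V_n(\Xi) = \tfrac{n}{n-1}$, so the cap body must add exactly $\tfrac{1}{n-1}V_n(\Xi)$ in volume. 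I expect the honest computation to go via the substitution $z = w_i \cdot x$ or via recognizing $D$ as an affine image of a standard object: note $w_i \cdot x \le 1$ means $(\sum_j x_j) - x_i \le 1$, so if $s = \sum_j x_j$ then $D = \{x \ge 0 : s - x_i \le 1 \ \forall i\} = \{x\ge 0 : s - \min_i x_i \le 1\}$. Integrating over the simplex-like region and handling the $\min$ by the standard symmetry/ordering trick ($n!$ orderings) should produce the factor $\tfrac{n}{n-1}$ after a one-variable integral. Once $V_n(D) = \tfrac{n}{n-1}V_n(\Xi)$ is in hand, the theorem follows immediately: apply Proposition~\ref{obl} and Theorem~\ref{simpcover} to conclude that a translate of $K$ sits inside $\psi D$, and then $V_n(K) \le V_n(\psi D) = \tfrac{n}{n-1}V_n(\psi\Xi) = \tfrac{n}{n-1}V_n(T)$.
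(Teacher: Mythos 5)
Your overall strategy is exactly the paper's: reduce to $T=\Xi$ by Proposition~\ref{obl}, invoke the proof of Theorem~\ref{simpcover} to place a translate of $K$ inside the cap body $D$, and finish by showing $V_n(D)=\tfrac{n}{n-1}V_n(\Xi)$ and using monotonicity of volume. The paper dispatches that last step as ``an elementary computation,'' but your attempt at it contains a concrete geometric error, and none of the fallback strategies you list is actually carried out, so the computational heart of the argument is left open. The error: the point $p=\bigl(\tfrac{1}{n-1},\ldots,\tfrac{1}{n-1}\bigr)$ satisfies $p\cdot(-e_i)=-\tfrac{1}{n-1}<0=h_\Xi(-e_i)$ for every $i$, so $p$ lies on the \emph{inner} side of each coordinate facet of $\Xi$; the only facet of $\Xi$ visible from $p$ is the one with outer normal $v$, namely $F=\mathrm{conv}\{e_1,\ldots,e_n\}$, since $p\cdot v=\tfrac{\sqrt n}{n-1}>\tfrac{1}{\sqrt n}=h_\Xi(v)$. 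Hence $D\setminus\Xi$ is a single cone over $F$ with apex $p$ --- not $n$ caps over the coordinate facets. Correspondingly, in the half-space description of $D$ it is the $v$-constraint $\sum_j x_j\le 1$ of $\Xi$ that gets discarded (replaced by the $n$ weaker constraints $w_i\cdot x\le 1$), while the coordinate constraints $e_i\cdot x\ge 0$ survive as facets of $D$; you have this exactly backwards, which is why your first count produces the wrong value $\tfrac{1}{n!}\bigl(1+\tfrac{n}{n-1}\bigr)$.

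With the correct picture the computation is immediate. The base $F$ has $(n-1)$-volume $\tfrac{\sqrt n}{(n-1)!}$, and the apex $p$ lies at distance $p\cdot v-\tfrac{1}{\sqrt n}=\tfrac{1}{\sqrt n\,(n-1)}$ from the hyperplane of $F$, so the cap has volume $\tfrac1n\cdot\tfrac{\sqrt n}{(n-1)!}\cdot\tfrac{1}{\sqrt n\,(n-1)}=\tfrac{1}{n!\,(n-1)}$, whence
$$V_n(D)=\frac{1}{n!}+\frac{1}{n!\,(n-1)}=\frac{n}{n-1}\cdot\frac{1}{n!}=\frac{n}{n-1}\,V_n(\Xi).$$
Everything else in your proposal --- the affine reduction via Proposition~\ref{obl}, the appeal to Theorem~\ref{simpcover}, and the concluding monotonicity step --- is correct and coincides with the paper's argument, so once this one computation is repaired the proof is complete.
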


\begin{proof} Without loss of generality (scaling as needed) we may assume that $V_n(T) = V_n(\Xi)$,
where $\Xi$ is the special simplex defined at the beginning of this section.
Applying volume-preserving affine transformations as needed, Proposition~\ref{obl} implies that 
we may also 
assume, without loss of generality, that
$T = \Xi$.  In this case the proof of Theorem~\ref{simpcover}
implies that $K$ lies inside the cap body $D$.  An elementary computation shows that
$$V_n(D) = \tfrac{n}{n-1}V_n(\Xi) = \tfrac{n}{n-1}V_n(T).$$
The theorem now follows from the monotonicity of volume.
\end{proof}

\section{When one body can hide behind another}
\label{sec.general}

We now re-state and prove the main theorem of this article, which
generalizes some of the results of the previous section to the case of {\em any}
two compact convex sets $K$, $L$ in $\RR^n$ such that orthogonal projections of $L$ contain translates of 
the corresponding 
projections of $K$.
\begin{theorem} Let $K$, $L \in \KK_n$.
Suppose that, for every unit vector $u$, the projection $L_u$ contains a translate
of the corresponding projection $K_u$.  Then there exists $x \in \RR^n$ such that
$$K + x \subseteq \tfrac{n}{n-1} L.$$
\label{coverbound}
\end{theorem}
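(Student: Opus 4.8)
The plan is to deduce Theorem~\ref{coverbound} from the simplicial case (Theorem~\ref{simpcover2}, or more precisely the cap-body statement in Theorem~\ref{simpcover}) together with Lutwak's Containment Theorem~\ref{lcon}. The containment theorem reduces the problem of showing $\tfrac{n}{n-1}L$ contains a translate of $K$ to the following: for every $n$-simplex $\triangle$ with $\tfrac{n}{n-1}L \subseteq \triangle$, exhibit a translate of $K$ inside $\triangle$. So fix such a circumscribing simplex $\triangle$.

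First I would pass to the simplex $T = \tfrac{n-1}{n}\triangle$, which circumscribes $L$, i.e. $L \subseteq T$. Since $L \subseteq T$, each projection $L_u$ is contained in $T_u$, and hence (trivially, without even needing a translation) $T_u$ contains a translate of $K_u$ for every unit direction $u$, by the hypothesis that $L_u$ contains a translate of $K_u$. Now apply the simplicial case: by Proposition~\ref{obl} we may move $T$ by a volume-preserving — in fact by any invertible — affine map to the standard simplex $\Xi$ (the projection-covering relation is affinely invariant, and containment of a translate inside a set is preserved under the inverse affine map), so Theorem~\ref{simpcover} yields a translate of $K$ inside the cap body $D$, and $D \subseteq \tfrac{n}{n-1}\Xi$. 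Pulling back through the affine map, this says there is a translate of $K$ inside $\tfrac{n}{n-1}T$. But $\tfrac{n}{n-1}T = \tfrac{n}{n-1}\cdot\tfrac{n-1}{n}\triangle = \triangle$, so $\triangle$ contains a translate of $K$.

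Since $\triangle$ was an arbitrary $n$-simplex circumscribing $\tfrac{n}{n-1}L$, Theorem~\ref{lcon} applied to the pair $K,\ \tfrac{n}{n-1}L$ gives a vector $x$ with $K + x \subseteq \tfrac{n}{n-1}L$, which is the assertion. The only mild subtlety — and the step I would be most careful about — is the bookkeeping in Proposition~\ref{obl} and Theorem~\ref{simpcover}: Theorem~\ref{simpcover} is stated for the specific simplex $\Xi$, so I need Proposition~\ref{obl} to transfer the covering hypothesis to $\Xi = \phi(T)$ for the affine $\phi$ taking $T$ to $\Xi$, and then transfer the conclusion ``$K' + x' \subseteq \tfrac{n}{n-1}\Xi$'' (where $K' = \phi(K)$, up to the translation absorbed in $\phi$) back to ``(translate of $K$) $\subseteq \tfrac{n}{n-1}T$'' by applying $\phi^{-1}$, using that $\phi^{-1}(\tfrac{n}{n-1}\Xi) = \tfrac{n}{n-1}T$ since dilation commutes with the linear part and one may center appropriately. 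None of this is deep, but it is the place where one must keep the affine map and the scaling factor $\tfrac{n}{n-1}$ straight. There is no real analytic obstacle here: all the hard work was done in Theorem~\ref{simpcover}, and this final argument is the standard ``Helly/Lutwak reduction to circumscribing simplices'' packaging.
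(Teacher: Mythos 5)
Your proposal is correct and follows essentially the same route as the paper: reduce via Lutwak's Containment Theorem~\ref{lcon} to circumscribing simplices, transfer the covering hypothesis to the standard simplex $\Xi$ by Proposition~\ref{obl}, invoke Theorem~\ref{simpcover} to place a translate of the image of $K$ inside $D \subseteq \tfrac{n}{n-1}\Xi$, and pull back. The only cosmetic difference is that you start from a simplex circumscribing $\tfrac{n}{n-1}L$ and scale down by $\tfrac{n-1}{n}$, whereas the paper starts from a simplex circumscribing $L$ and scales up; the two are equivalent by the same observation you and the paper both make.
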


This theorem gives a sharp bound for containment by sets with covering projections.  To see this, 
recall that if $K \in \KK_n$, the set $-nK$ contains a translate of $K$, by Proposition~\ref{KnK}.
Then consider the case in which $K$ is the regular unit edge $n$-simplex $\triangle$, 
and $L =(n-1)(-\triangle)$.  
For each direction $u$, the projection 
$-(n-1)\triangle_u$ contains a translate of $\triangle_u$.  
Meanwhile, the smallest dilate of $-\triangle$ to contain
a translate of $K=\triangle$ is 
$$n(-\triangle) = \tfrac{n}{n-1} (n-1)(-\triangle) = \tfrac{n}{n-1} L.$$
It follows that the coefficient $\tfrac{n}{n-1}$ in Theorem~\ref{coverbound} cannot be improved.

\begin{proof}[Proof of Theorem~\ref{coverbound}] 
Let $T$ be any $n$-simplex that contains $L$.  Since each projection $L_u$ contains
a translate of $K_u$, it follows that each $T_u$ contains a translate of $K_u$.  
Let $\Xi$ and $D$ again denote the simplex and cap body defined in the previous section. 
Let $\psi$ be an invertible affine transformation $\psi$
such that $\psi T = \Xi$.
By Proposition~\ref{obl} each projection $\Xi_u$ contains a translate of
the corresponding projection $(\psi K)_u$ of the body $\psi K$.
By Theorem~\ref{simpcover} there is $x \in \RR^n$ such that
$$\psi K + x \subseteq D \subseteq \tfrac{n}{n-1} \Xi.$$
Since $\psi$ is affine and invertible, it follows that the simplex
$$\tilde{T} = \tfrac{n}{n-1} T$$
contains a translate of $K$.  Meanwhile $\tilde{T}$ circumscribes 
$\tfrac{n}{n-1} L$ if and only if $T$ circumscribes $L$.
So we have shown that every circumscribing simplex $\tilde{T}$ of 
$\tfrac{n}{n-1} L$ contains a translate of $K$.  It follows
from the Lutwak Containment Theorem~\ref{lcon} that
$\tfrac{n}{n-1} L$ contains a translate of $K$.
\end{proof}

\begin{corollary} Let $K$, $L \in \KK_n$.
Suppose that, for every unit vector $u$, the projection $L_u$ contains a translate
of the corresponding projection $K_u$.  Then 
$$V_n(K) \leq \left( \tfrac{n}{n-1} \right)^n V_n(L).$$
\label{ebound}
\end{corollary}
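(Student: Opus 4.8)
The plan is to obtain this inequality as an immediate consequence of Theorem~\ref{coverbound}, using only the elementary scaling and monotonicity properties of Lebesgue volume. First I would invoke Theorem~\ref{coverbound}: since the hypotheses of the corollary are exactly those of that theorem, there exists a vector $x \in \RR^n$ such that
$$K + x \subseteq \tfrac{n}{n-1} L.$$
From here the argument is a one-line computation, so there is really no serious obstacle to overcome; the entire difficulty of the result is already absorbed into the Main Theorem.

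Next I would apply the three standard facts about $V_n$: translation invariance, monotonicity with respect to set inclusion, and the homogeneity relation $V_n(\lambda M) = \lambda^n V_n(M)$ for $\lambda \geq 0$ and $M \in \KK_n$. Chaining these together gives
$$V_n(K) = V_n(K + x) \leq V_n\!\left(\tfrac{n}{n-1} L\right) = \left(\tfrac{n}{n-1}\right)^{n} V_n(L),$$
which is the claimed bound. Translation invariance justifies the first equality, the inclusion $K + x \subseteq \tfrac{n}{n-1}L$ together with monotonicity justifies the inequality, and homogeneity of degree $n$ justifies the last equality.

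Finally, I would remark (optionally) that the bound is essentially as good as possible in this generality: the sharpness of the coefficient $\tfrac{n}{n-1}$ in Theorem~\ref{coverbound}, exhibited by taking $K$ a regular simplex and $L = (n-1)(-K)$, shows that no smaller dilation constant works, so the exponentiated inequality above is the best volume ratio one can extract by this containment method. No auxiliary lemmas beyond those already established in the excerpt are needed.
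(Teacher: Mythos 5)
Your proposal is correct and follows exactly the paper's own argument: invoke Theorem~\ref{coverbound} to get $K + x \subseteq \tfrac{n}{n-1}L$, then apply translation invariance, monotonicity, and degree-$n$ homogeneity of $V_n$. Nothing further is needed.
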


\begin{proof} By Theorem~\ref{coverbound}
there exists $x \in \RR^n$ such that
$$K + x \subseteq \tfrac{n}{n-1} L,$$
so that
$$V_n(K) = V_n(K+x) \leq V_n \left( \tfrac{n}{n-1} L \right) = 
\left(  \tfrac{n}{n-1} \right)^n V_n(L).$$
\end{proof}

In particular, if each $L_u$ contains a translate of $K_u$ then there is a constant $c_n \in \RR$
independent of $K, L \in \KK_n$ such that
\begin{align}
V_n(K) \leq c_n V_n(L),
\label{cbound}
\end{align}
where $c_n \rightarrow e$ as $n \rightarrow \infty$.

The volume ratio bound~(\ref{cbound}) gives a substantial improvement over those previously known.
In \cite{Klain-Shadow}  
circumscribing cylinders were used to show that 
$c_n \leq n$ 
for all $n \geq 1$.
A simple argument also implies that $c_2 = 3/2$ is the best possible result in dimension $2$.
More generally,
an upper bound for $c_n$ can also be obtained using the Rogers-Shephard inequality 
(also known as the {\em difference body inequality} \cite{Chak-RS,Rogers-Shep}\cite[p. 409]{red}).
This inequality asserts that, for $K \in \KK_n$,
\begin{align}
V_n(K + (-K)) \leq \binom{2n}{n} V_n(K).
\label{rshep}
\end{align}
If $V_n(K) > 0$ then
equality holds in~(\ref{rshep}) if and only if $K$ is a simplex.

To obtain a bound for $c_n$ using~(\ref{rshep}), 
suppose $K, L \in \KK_n$ and that each $L_u$ contains a translate of $K_u$.
It follows that the same relation holds for the Minkowski symmetrals $\tfrac{1}{2}(L+ (-L))$ and 
$\tfrac{1}{2}(K+ (-K))$.  Since these symmetrals are both {\em centrally symmetric} it follows that
$$\tfrac{1}{2}(K+ (-K)) \subseteq \tfrac{1}{2}(L+ (-L)),$$
so that
$$V_n(K) \leq V_n \Big( \tfrac{1}{2}(K+ (-K)) \Big) 
\leq V_n \Big( \tfrac{1}{2}(L+ (-L)) \Big)
\leq \frac{1}{2^n} \binom{2n}{n} V_n(L),$$
where the first inequality follows from the Brunn-Minkowski inequality~(\ref{bmcc}) and the final
inequality is the Rogers-Shephard inequality~(\ref{rshep}).  In dimension 2 this yields 
the sharp bound $c_2 = 3/2$, where equality is attained when $L$ is a triangle
and $K = \tfrac{1}{2}(L+ (-L))$.  

However, for dimensions 3 and above, the Rogers-Shephard inequality no longer gives the best possible
bound for $c_n$.
More complicated results of Ball \cite{Ball-Shad} (see also \cite[p.163-164]{Gard2006}) 
imply that if the area of each projection of $L$ exceeds the corresponding
area of each projection of $K$ (a much weaker assumption than actual covering of projections) then $c_n$ grows with
order at most $\sqrt{n}$, with a universal (weak) bound of 
\begin{align}
V_n(K) \leq 1.1696 \sqrt{n} V_n(L).
\label{bbound}
\end{align}
The bound~(\ref{bbound}) implies that $c_3 \leq 2.026$, whereas the Rogers-Shephard bound only tells us
that $c_3 \leq 2.5$.  The inequality~(\ref{bbound}) still gives the best known bound for $c_n$ when $3 \leq n \leq 6$,
although all known numerical evidence suggests these bounds can be substantially improved 
(see also Section~\ref{sec.conclusion}).   Moreover, the bound~(\ref{bbound}) increases without limit 
as $n \rightarrow \infty$.

Theorem~\ref{coverbound} (and Corollary~\ref{ebound}) implies that, if projections of $L$ can cover
projections of $K$, then $c_n$ is actually bounded by a {\bf\em universal constant} independent of 
the dimension $n$.  
The inequality~(\ref{bbound}) implies that 
$$c_n \leq 1.1696 \sqrt{6} \approx 2.865,$$
for $n \leq 6$, 
but gives bounds larger than $3$ and increasing without limit for $n \geq 7$.
Meanwhile, Corollary~\ref{ebound} 
implies that
$$c_n \leq \Big( \tfrac{n}{n-1} \Big)^n 
\leq \Big( \tfrac{7}{6} \Big)^7 \approx 2.942$$
for $n \geq 7$,
giving a universal volume ratio bound of $c_n \leq 2.942$ in all finite dimensions.
Possible improvements for this universal bound are discussed in Section~\ref{sec.conclusion}.

\section{Projections to intermediate dimensions}
\label{sec.dim}

Theorem~\ref{coverbound} generalizes easily to projections onto an arbitrary lower dimension.
In order to obtain similar asymptotic bounds (as the ambient dimension $n \rightarrow \infty$),
these analogous results are best expressed
in terms of the {\em co-dimension} of the
projections.

If $\xi$ is a subspace of $\RR^n$ and $K \in \KK_n$,
we will denote by $K_\xi$ the orthogonal projection of $K$ onto $\xi$.
\begin{theorem} Let $K$, $L \in \KK_n$, and let $d \in \{1, \ldots, n-1\}$.
Suppose that, for every $(n-d)$-dimensional subspace $\xi \subseteq \RR^n$, the projection $L_\xi$ contains a translate
of the corresponding projection $K_\xi$.  Then there exists $x \in \RR^n$ such that
$$K + x \subseteq \tfrac{n}{n-d} L.$$
\label{coverboundi}
\end{theorem}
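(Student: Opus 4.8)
The plan is to reproduce the argument behind Theorems~\ref{simpcover} and~\ref{coverbound}, with the cap body $D$ replaced by a polytope $D_d$ adapted to co-dimension~$d$. First I would record the co-dimension-$d$ analogue of Proposition~\ref{obl}: the hypothesis that $L_\xi$ contains a translate of $K_\xi$ for every $(n-d)$-dimensional subspace $\xi$ is preserved by invertible affine maps. The proof of Proposition~\ref{obl} carries over essentially word for word, with the family of lines parallel to a direction $u$ replaced by the family of $d$-dimensional affine flats parallel to the subspace $\xi^\perp$; an invertible affine map carries $d$-flats to $d$-flats, and as $\xi$ runs over all $(n-d)$-dimensional subspaces so does its image under the map's linear part, so the quantifier is preserved. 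In particular the covering relation is preserved by any affine change of coordinates.

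Next I would prove the simplex case, i.e.\ the analogue of Theorem~\ref{simpcover}. Let $\Xi$ be the standard simplex with vertices $o, e_1, \dots, e_n$, and for a $d$-element subset $I \subseteq \{1,\dots,n\}$ put $\xi_I = \operatorname{span}\{e_j : j \notin I\}$; then $\Xi_{\xi_I}$ is the face of $\Xi$ spanned by $\{o\} \cup \{e_j : j \notin I\}$, described inside $\xi_I$ by the outer normals $-e_j$ ($j \notin I$) together with the normal $v_I = \sum_{j \notin I} e_j$, whose halfspace is $\sum_{j \notin I} y_j \le 1$. Assuming every $(n-d)$-projection of $\Xi$ contains a translate of the corresponding projection of $K$, translate $K$ into the positive orthant so that $h_K(-e_j) = 0$ for all $j$. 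Imitating the proof of Theorem~\ref{simpcover}: if $x \in \xi_I$ satisfies $K_{\xi_I} + x \subseteq \Xi_{\xi_I}$, comparing support numbers in the directions $-e_j$ with $j \notin I$ (which lie in $\xi_I$, so $h_{K_{\xi_I}}(-e_j) = h_K(-e_j) = 0$) forces $x_j \ge 0$, hence $x \cdot v_I \ge 0$; since also $h_{K_{\xi_I}}(-e_j) = 0 = h_{\Xi_{\xi_I}}(-e_j)$, we conclude $K_{\xi_I} \subseteq \Xi_{\xi_I}$, i.e.\ $\sum_{j \notin I} y_j \le 1$ for all $y \in K$. Letting $I$ range over all $d$-subsets gives $K \subseteq D_d$, where
\[
D_d \;=\; \Big\{\, x \in \RR^n \;:\; x_j \ge 0 \text{ for all } j,\ \ \textstyle\sum_{j \notin I} x_j \le 1 \text{ for every } I \text{ with } |I| = d \,\Big\}.
\]
Averaging the defining inequalities over all $\binom{n}{d}$ subsets $I$ — each coordinate $x_j$ occurring $\binom{n-1}{d}$ times — gives $\binom{n-1}{d}\sum_j x_j \le \binom{n}{d}$, hence $\sum_j x_j \le n/(n-d)$, and together with $x \ge 0$ this yields $D_d \subseteq \tfrac{n}{n-d}\Xi$. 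Thus $K \subseteq D_d \subseteq \tfrac{n}{n-d}\Xi$.

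The passage to arbitrary $L$ then runs exactly as in the proof of Theorem~\ref{coverbound}. Let $T$ be any $n$-simplex with $L \subseteq T$; then $L_\xi \subseteq T_\xi$ for all $\xi$, so the $(n-d)$-projections of $T$ contain translates of those of $K$. Choose an invertible affine $\psi$ with $\psi T = \Xi$; by the affine invariance above, the $(n-d)$-projections of $\Xi$ contain translates of those of $\psi K$, so the simplex case gives $x$ with $\psi K + x \subseteq D_d \subseteq \tfrac{n}{n-d}\Xi$. Applying $\psi^{-1}$, a translate of $\tfrac{n}{n-d}T$, and hence $\tfrac{n}{n-d}T$ itself, contains a translate of $K$. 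Since every $n$-simplex containing $\tfrac{n}{n-d}L$ is of the form $\tfrac{n}{n-d}T$ with $T \supseteq L$ an $n$-simplex, the Lutwak Containment Theorem~\ref{lcon} produces $x \in \RR^n$ with $K + x \subseteq \tfrac{n}{n-d}L$.

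The main obstacle is not a deep one: the two points needing care are (i)~that the step in the proof of Theorem~\ref{simpcover} passing from ``$\Xi_{\xi_I}$ contains a translate of $K_{\xi_I}$'' to ``$\Xi_{\xi_I}$ contains $K_{\xi_I}$'' still goes through when coordinate hyperplanes are replaced by coordinate $(n-d)$-planes — it does, because the relevant facet normals $-e_j$ remain inside $\xi_I$ — and (ii)~the combinatorial averaging that singles out $\tfrac{n}{n-d}$ as the correct dilation factor for $D_d$.
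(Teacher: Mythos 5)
Your proof is correct, but it takes a genuinely different route from the paper. The paper deduces Theorem~\ref{coverboundi} from the already-established co-dimension-one case (Theorem~\ref{coverbound}) by an induction on the co-dimension: it applies Theorem~\ref{coverbound} inside each intermediate subspace $\xi \oplus \overline{u}$ of dimension $n-d+1$ to show that covering at co-dimension $d$ implies covering by $\tfrac{n-d+1}{n-d}L$ at co-dimension $d-1$, and after $d$ iterations the factors telescope to $\tfrac{n}{n-1}\cdots\tfrac{n-d+1}{n-d} = \tfrac{n}{n-d}$. You instead rerun the entire cap-body machinery of Sections~\ref{sec.simplex}--\ref{sec.general} at co-dimension $d$: the affine-invariance lemma with $d$-flats in place of lines, the support-function argument showing $h_K(v_I)\le 1$ for every $d$-subset $I$ (which is sound --- the facet normals $-e_j$, $j\notin I$, do lie in $\xi_I$, so the sign argument for the translating vector goes through), and the averaging over the $\binom{n}{d}$ inequalities that produces $\sum_j x_j \le \binom{n}{d}/\binom{n-1}{d} = \tfrac{n}{n-d}$, hence $K \subseteq D_d \subseteq \tfrac{n}{n-d}\Xi$, followed by the same Lutwak circumscription step. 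The paper's reduction is shorter once Theorem~\ref{coverbound} is in hand; your construction costs more writing but yields an explicit intermediate polytope $D_d$ sandwiched between $K$ and $\tfrac{n}{n-d}\Xi$, which in principle carries extra information (for instance, computing $V_n(D_d)$ would give a co-dimension-$d$ analogue of the sharper simplex bound in Theorem~\ref{simpcover2}, something the paper's iterative proof does not directly provide).
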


This theorem gives a sharp bound for containment by sets with covering projections.  To see this, 
recall that if $K \in \KK_n$, then each $(n-d)$-dimensional projection 
$(n-d)(-K)_\xi$ of $(n-d)(-K)$ contains a translate of the corresponding
projection $K_\xi$, by Proposition~\ref{KnK}.
Then consider the
case in which $K$ is the regular unit edge $n$-simplex $\triangle$, and $L =(n-d)(-\triangle)$.  For each 
$(n-d)$-dimensional subspace $\xi$,
the projection $(n-d)(-\triangle)_\xi$ contains a translate of $\triangle_\xi$.  Meanwhile, the smallest dilate of $-\triangle$ to contain
a translate of $K=\triangle$ is 
$$n(-\triangle) = \tfrac{n}{n-d} (n-d)(-\triangle) = \tfrac{n}{n-d} L.$$
It follows that the coefficient $\tfrac{n}{n-d}$ in Theorem~\ref{coverboundi} cannot be improved.

\begin{proof}[Proof of Theorem~\ref{coverboundi}] The case of $d = 1$ is addressed by Theorem~\ref{coverbound}.  
If $d > 1$ let $u \in \xi^\perp$ be a unit vector, and let $\overline{u}$ denote the line through the origin 
spanned by $u$.
By Theorem~\ref{coverbound}, applied within the $(n-d+1)$-dimensional space $\xi \oplus \overline{u}$, 
there is a vector $y$ such that
$$K_{\xi \oplus \overline{u}} + y \subseteq \tfrac{n-d+1}{n-d} L_{\xi \oplus \overline{u}}.$$
In other words, 
for every $(n-d+1)$-dimensional subspace $\xi' \subseteq \RR^n$
the set $\tfrac{n-d+1}{n-d} L_{\xi'}$ contains a translate of $K_{\xi'}$.  After $d$ iterations of this argument we obtain 
a vector $x$ such that
$$K + x \subseteq \tfrac{n}{n-1} \cdots \tfrac{n-d+2}{n-d+1}\tfrac{n-d+1}{n-d} L = \tfrac{n}{n-d}L.$$
\end{proof}

\begin{corollary} Let $K$, $L \in \KK_n$, and let $d \in \{1, \ldots, n-1\}$.
Suppose that, for every $(n-d)$-dimensional subspace $\xi \subseteq \RR^n$, the projection $L_\xi$ contains a translate
of the corresponding projection $K_\xi$. Then 
$$V_n(K) \leq \left( \tfrac{n}{n-d} \right)^n V_n(L).$$
\label{eboundi}
\end{corollary}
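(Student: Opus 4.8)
The plan is to obtain this inequality immediately from Theorem~\ref{coverboundi}, in exactly the same way that Corollary~\ref{ebound} was deduced from Theorem~\ref{coverbound}. First I would invoke Theorem~\ref{coverboundi}: under the stated hypothesis that every $(n-d)$-dimensional projection $L_\xi$ contains a translate of $K_\xi$, there exists $x \in \RR^n$ with $K + x \subseteq \tfrac{n}{n-d} L$. All the real content — the iteration over intermediate-dimensional subspaces that reduces the problem to the co-dimension-one case of Theorem~\ref{coverbound}, which itself rests on the cap-body analysis of Section~\ref{sec.simplex}, Proposition~\ref{obl}, and Lutwak's Containment Theorem~\ref{lcon} — is already packaged there, so nothing new must be proved.

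Second, I would apply the $n$-dimensional volume $V_n$ to both sides of this containment, using three standard properties recalled in Section~\ref{sec.backg}: translation invariance of volume, monotonicity of volume with respect to set inclusion, and homogeneity of degree $n$ under dilation. Chaining these gives
\[
V_n(K) = V_n(K+x) \leq V_n\!\left(\tfrac{n}{n-d}L\right) = \left(\tfrac{n}{n-d}\right)^{n} V_n(L),
\]
which is the assertion. Taking $d=1$ recovers Corollary~\ref{ebound}.

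There is essentially no obstacle: the corollary is a one-line consequence of the containment statement. The only point worth flagging is purely expository — that one should cite Theorem~\ref{coverboundi} rather than re-derive the iteration, and that the constant $\left(\tfrac{n}{n-d}\right)^n$ here is stated without a sharpness claim (sharpness of the containment coefficient $\tfrac{n}{n-d}$ itself having already been addressed in the discussion following Theorem~\ref{coverboundi}).
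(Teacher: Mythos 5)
Your proposal is correct and matches the paper's proof exactly: the paper also deduces the corollary in one line by citing Theorem~\ref{coverboundi} to get $K + x \subseteq \tfrac{n}{n-d}L$ and then applying translation invariance, monotonicity, and degree-$n$ homogeneity of volume. Nothing is missing.
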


\begin{proof} By Theorem~\ref{coverboundi}
there exists $x \in \RR^n$ such that
$$K + x \subseteq \tfrac{n}{n-d} L,$$
so that
$$V_n(K) = V_n(K+x) \leq V_n \left( \tfrac{n}{n-d} L \right) = 
\left(  \tfrac{n}{n-d} \right)^n V_n(L).$$
\end{proof}

Note that, after fixing the co-dimension $d$, we have
$$\lim_{n\rightarrow\infty} \left( \tfrac{n}{n-d} \right)^n 
= \lim_{n\rightarrow\infty} \left(1+ \tfrac{d}{n-d} \right)^n
= \lim_{n\rightarrow\infty} \left(1+ \tfrac{d}{n-d} \right)^d \left(1+ \tfrac{d}{n-d} \right)^{n-d} = e^d.$$
Corollary~\ref{eboundi} implies that, if $L_\xi$ contains a translate of $K_\xi$ for every $(n-d)$-dimensional subspace $\xi \subseteq \RR^n$,
then there is a constant $c_{n,d} \in \RR$
independent of $K, L \in \KK_n$ such that
\begin{align}
V_n(K) \leq c_{n,d} V_n(L),
\label{cboundi}
\end{align}
where $c_{n,d} \rightarrow e^d$ as $n \rightarrow \infty$.
It follows that, for fixed co-dimension $d$, the coefficient $c_{n,d}$ can be replaced by
a universal constant $\gamma_d$ independent of the bodies $K$ and $L$ 
and independent of the ambient dimension $n$.

\section{Concluding remarks and open questions}
\label{sec.conclusion}

Numerical evidence suggests that
the volume ratio bounds in this article can almost certainly be improved \cite{cchen}.  
In Section~\ref{sec.general} we showed that, 
if each projection $L_u$ contains a translate of $K_u$, then
\begin{align}
V_n(K) \leq c V_n(L),
\label{c}
\end{align}
where $c$ is a constant independent of the dimension $n$, and where $c < 2.942$.
However, computational evidence suggests that $c$ is much smaller.  

If we fix the dimension $n$, then Theorem~\ref{simpcover2}
gives a value of $c = \frac{n}{n-1}$ when the body $L$ is an $n$-simplex.
On the other hand, previous work \cite{Klain-Shadow} implies that 
$$V_n(K) \leq V_n(L),$$
in the special case where $L$ is a cylinder, or even a cylinder body (that is,
a limit of Blaschke combinations of cylinders).  
This suggests that simplices may be the worst case scenario for bodies with covering
shadows, and motivates the following conjecture:
\begin{conjecture}
Let $K, L \in \KK_n$ and suppose that
$L_u$ contains a translate of $K_u$ 
for every unit vector $u \in \RR^n$.  
Then
\begin{align}
V_n(K) \leq \frac{n}{n-1} V_n(L),
\label{conj}
\end{align}
\end{conjecture}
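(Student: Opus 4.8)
The plan is to establish that, among all bodies $L$ whose shadows cover those of a fixed $K$, a simplex is the extremal case, so that the already-sharp simplicial estimate of Theorem~\ref{simpcover2} propagates to arbitrary $L$. Two complementary routes suggest themselves, and I would pursue both.

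First route: extremal reduction plus shadow systems. Since Corollary~\ref{ebound} bounds $V_n(K)/V_n(L)$ over all admissible pairs, a maximizing sequence has a Hausdorff-convergent subsequence by a standard compactness argument (Blaschke selection), and continuity of the covering relation under Hausdorff limits shows the supremum is attained by some pair $(K,L)$. I would then perturb $L$ by shadow systems (linear parameter movements of $L$ along a fixed direction $e$): such deformations keep the projection $(L_t)_e$ constant, hence preserve covering in the direction $e$, and $t \mapsto V_n(L_t)$ is convex. The aim is to show that any admissible shadow system that does not decrease the shadows in the remaining directions can only decrease $V_n(L)$, so that extremality forces $L$ to be "rigid", and then to combine this rigidity with the facet-contact analysis from the proof of Theorem~\ref{simpcover} to conclude $L$ is a simplex. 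The structural parallel with the Rogers-Shephard inequality — whose equality case is also the simplex and whose proof runs through shadow systems — is the main reason to expect this to work.

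Second route: a generalized cap body built directly around $K$. In the simplicial case we circumscribed $\Xi$ by the $n$ prisms $C_i = \Xi_{e_i}+E_i$ and used $K+x \subseteq \bigcap_i C_i = D$ with $V_n(D) = \tfrac{n}{n-1}V_n(\Xi)$. For general $L$ I would try to choose finitely many directions $u_1,\dots,u_m$ and circumscribing cylinders $C_j = L_{u_j} + I_j \supseteq L$ (with $I_j$ a segment parallel to $u_j$) such that, after positioning $K$ via the covering hypothesis as in Theorem~\ref{simpcover}, one has $K + x \subseteq \bigcap_j C_j =: D_L$, while $V_n(D_L) \leq \tfrac{n}{n-1} V_n(L)$. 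The containment step should again be pure support-function bookkeeping; the volume estimate for an intersection of cylinders over a convex base is the crux, and I would expect to need Brunn-Minkowski or Alexandrov-Fenchel to carry it out, with equality forced exactly when $L$ is a simplex. As a warm-up I would first attempt this for $L$ a polytope and induct on the number of facets, the cap-body construction of Section~\ref{sec.simplex} being the base case.

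I expect the main obstacle to be precisely the step the paper leaves open: controlling $V_n$ along the shadow-system deformation, or bounding $V_n(D_L)$, without leaking a dimension-dependent factor. The tools currently in hand — Lutwak's containment criterion (Theorem~\ref{lcon}) together with the cap body — are exactly strong enough to give containment in $\tfrac{n}{n-1}L$, hence the bound $\bigl(\tfrac{n}{n-1}\bigr)^{n}$, but a bare containment statement cannot detect that the translate of $K$ is confined to a "corner" portion of that dilate of volume only $\tfrac{n-1}{n}$ of the whole; obtaining the linear bound requires an argument that simultaneously locates where inside $\tfrac{n}{n-1}L$ the copy of $K$ must lie. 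It is not clear that either route above furnishes this in full generality rather than only under extra hypotheses on $L$ (such as $L$ being a polytope, or a cylinder body, where the stronger bound $V_n(K)\le V_n(L)$ is already known), which is exactly why the statement is posed here as a conjecture.
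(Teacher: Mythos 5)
This statement is posed in the paper as an open \emph{conjecture}: the paper offers no proof of it (only the remark that the case $n=2$ follows from the Rogers--Shephard argument in Section~\ref{sec.general}, and that $n\geq 3$ is open). Your proposal is likewise not a proof but a research program, and you say as much in your final paragraph; so there is no completed argument to check. The honest self-assessment is to your credit, but both of your routes stall exactly where you predict. In the first route, the key unestablished step is that a shadow system deformation of $L$ along a direction $e$ preserves only the single projection $L_e$; it changes every other projection $L_u$, so there is no reason the covering hypothesis survives the deformation, and hence no admissible one-parameter family along which to exploit convexity of volume. Even granting an extremal pair exists (which itself needs a normalization to prevent degeneration in the Blaschke selection step), the claim that extremality forces $L$ to be a simplex is precisely the content of the conjecture, not a lemma you can invoke.

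In the second route, the construction $D=\bigcap_i C_i$ with $V_n(D)=\tfrac{n}{n-1}V_n(\Xi)$ depends on very special features of $\Xi$: each projection $\Xi_{e_i}$ is itself a facet of $\Xi$, the $n$ cylinders share $n$ of their bounding half-spaces with $\Xi$, and the corner-positioning argument in the proof of Theorem~\ref{simpcover} pins down a single translate of $K$ lying in all $n$ cylinders simultaneously. For a general $L$ there is no canonical position guaranteeing one translate of $K$ sits inside every circumscribing cylinder $L_{u_j}+I_j$ at once (the covering hypothesis gives a possibly different translate for each direction), and no estimate of the form $V_n\bigl(\bigcap_j C_j\bigr)\leq\tfrac{n}{n-1}V_n(L)$ is available; indeed bounding the volume of an intersection of cylinders over $L$ by a dimension-free multiple of $V_n(L)$ is essentially as hard as the conjecture itself. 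So both routes reduce the conjecture to statements that are themselves open, and the proposal should be read as a plausible plan of attack rather than a proof.
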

This conjecture is already known to be true in dimension $2$ 
(indeed, we observed in Section~\ref{sec.general} that
$c_2 = 3/2$ 
is the best possible bound), but remains open for dimensions $n \geq 3$.
If this conjecture is true, then the universal volume ratio constant for all dimensions $n \geq 2$
would satisfy $c = 3/2$.

Even if the conjecture above is proven correct, it remains to determine 
the best upper bound for the ratio 
$$c_n = \frac{V_n(K)}{V_n(L)}$$ in each dimension
separately, for the conjectured bound~(\ref{conj}) does not appear to be sharp in dimensions $n \geq 3$.
Examples investigated so far suggest that the highest volume ratio ought to occur when a suitable convex Minkowski
combination of a simplex $\triangle$ with the scaled reflection $(n-1)(-\triangle)$ hides behind
the set $(n-1)(-\triangle)$.  

A direct computation \cite{cchen} shows that, if $\triangle$ is a tetrahedron in $\RR^3$, and if
\begin{align}
K = \left(1- \frac{1+\sqrt{56}}{11} \right)\triangle + \left(\frac{1+\sqrt{56}}{11} \right)(-2\triangle)
\quad \hbox{ and } \quad L = -2\triangle, 
\label{worst}
\end{align}
then each projection $L_u$ contains a translate of $K_u$ (by Proposition~\ref{KnK}, applied in dimension 2),
while
$$\frac{V_n(K)}{V_n(L)} \approx 1.1634.$$
See Figure~\ref{diff-b}.
\begin{figure}[ht]
\includegraphics[scale=0.1]{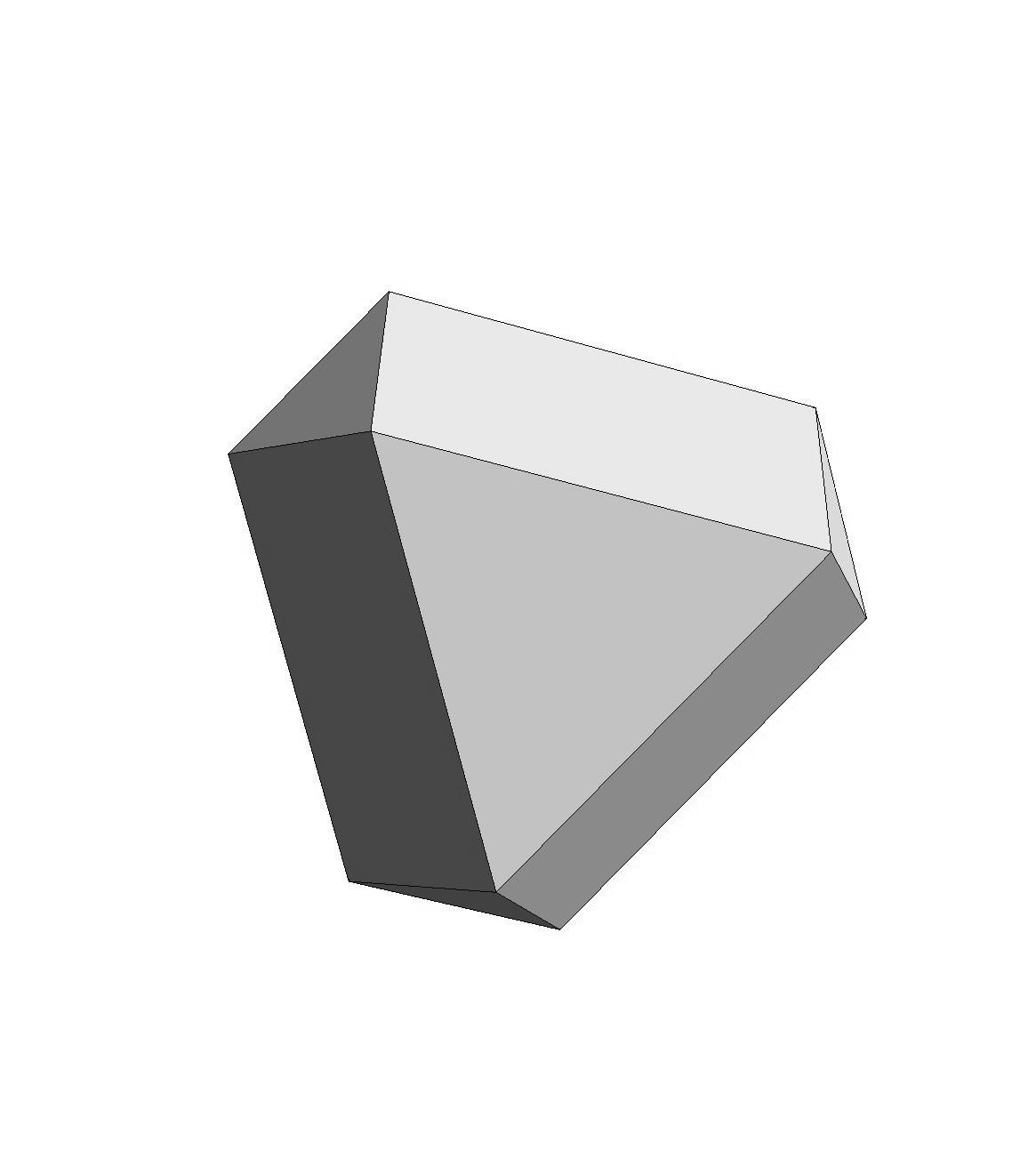} 
\includegraphics[scale=0.1]{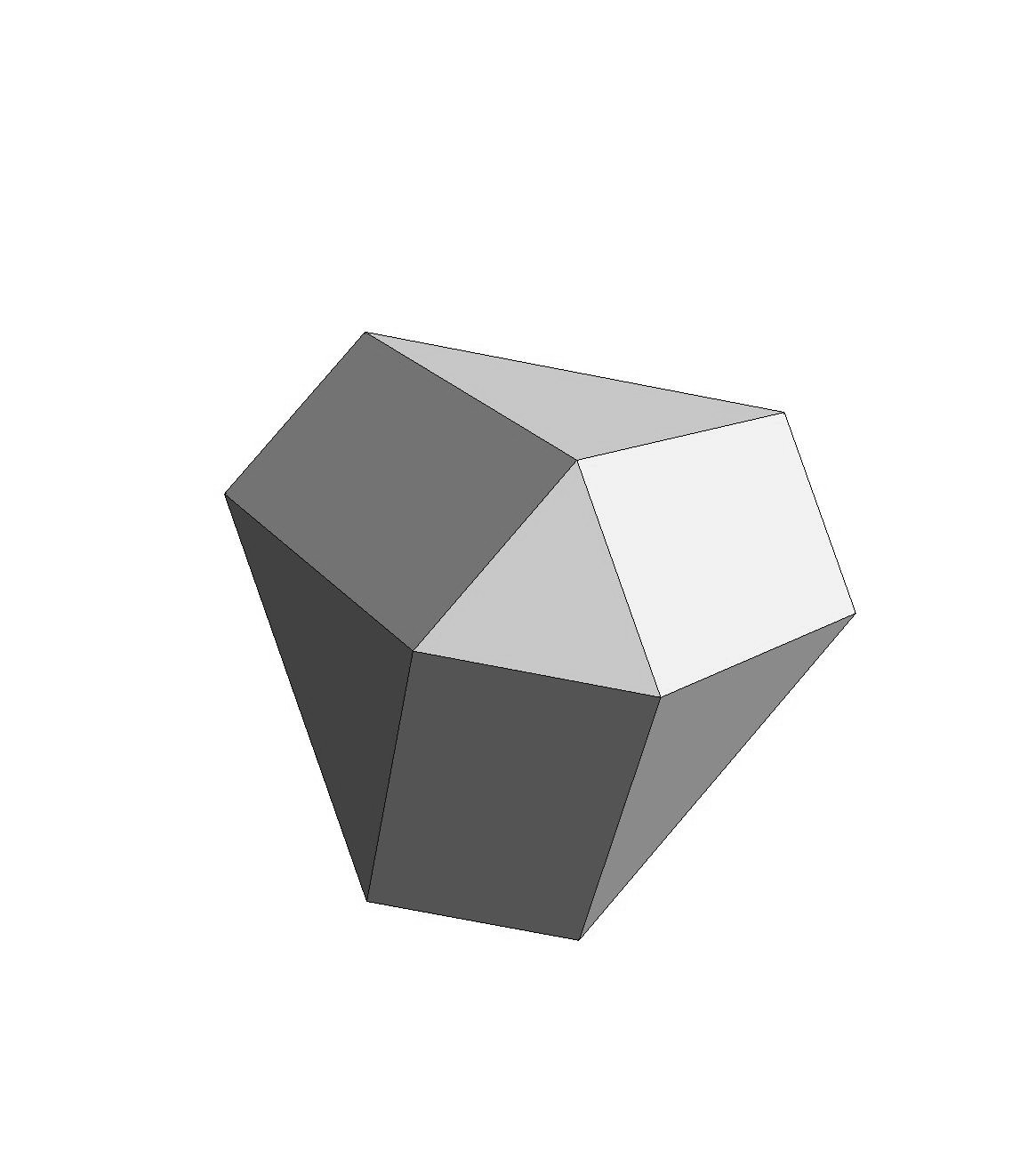} 
\caption{Two views of the Minkowski combination $K$ of a regular tetrahedron with its reflection, 
as specified in~(\ref{worst}).}
\label{diff-b}
\end{figure}
We conjecture that the best possible bound for the volume ratio in dimension 3 is the value $c_3 \approx 1.1634$ occurring
with the pair of bodies specified in~(\ref{worst}),
and that analogous computations with simplices 
in $\RR^n$ will yield the best bounds for $c_n$.  However, these
assertions remain conjectures at this point.

\begin{question}
What is the best possible value for the volume ratio bound $c_n$ for each particular
dimension $n \geq 3$?  For $c_{n,d}$?
\end{question}

\begin{question}
What is the best possible value for the universal volume ratio bound $c$ 
for all dimensions $n \geq 3$?  For $\gamma_{d}$?
\end{question}

\begin{question}
Let $K, L \in \KK_n$, and let $d \in \{ 1, \ldots, n-1 \}$.
Suppose that, for each $(n-d)$-dimensional subspace $\xi$ of $\RR^n$, the orthogonal projection 
$L_\xi$ of $K$ contains a translate of $K_\xi$.  

Under what simple (easy to state, easy to verify) additional conditions
does it follow that $V_n(K) \leq V_n(L)$?\\
\end{question}
Some partial answers to the third 
question are given  
in \cite{Klain-Shadow}.  
There it is shown that if $K_\xi$ can be translated inside $L_\xi$ 
for all $(n-d)$-dimensional subspaces $\xi$,  
then $K$ has smaller volume than $L$ whenever $L$ can be approximated by Blaschke combinations of
$(n-d)$-decomposable sets.  Moreover, in \cite{Klain-Circ} it is shown that, for example, 
if projections of a right square pyramid $Q$ contains translates of the projections of a convex body 
$K$ in $\RR^3$, then $Q$ contains a translate of $K$ (and so certainly has greater volume).  
Since $Q$ does not appear to be a cylinder body (a class of bodies not yet easily characterized), 
it is likely that a larger class of examples exist for bodies $L$ whose volume exceeds that of any
body $K$ having smaller shadows (up to translation).

The question of volume comparison was originally motivated by the following.
\begin{question}
Let $K, L \in \KK_n$, and let $d \in \{ 1, \ldots, n-1 \}$.
Suppose that, for each $(n-d)$-dimensional subspace $\xi$ of $\RR^n$, the orthogonal projection 
$L_\xi$ of $K$ contains a translate of $K_\xi$.  

Under what simple (easy to state, easy to verify) additional conditions
does it follow that $L$ contains a translate of $K$? \\
\end{question}
Some partial answers to this 
question are given in \cite{Klain-Inscr} and \cite{Klain-Circ}.

All of these many questions can be re-phrased allowing for 
(specified subgroups of) rotations (and reflections) as well as
translations.  However, the results obtained so far rely on the observation that the set of
translates of $K$ that fit inside $L$, that is, the set
$$\{v \in \RR^n \; | \; K+v \subseteq L\},$$
is itself a compact convex set in $\RR^n$.  
By contrast, the set of rigid motions
of $K$ that fit inside $L$ will lie in a more complicated Lie group.  For this reason (at least)
the questions of covering via rigid motions may be more difficult to address than
the case in which only translation is allowed.

\section*{Acknowledgements}

This project was supported in part by the 
Program for Research in Mathematics, Engineering, 
and Science for High School Students (PRIMES) at MIT.


\providecommand{\bysame}{\leavevmode\hbox to3em{\hrulefill}\thinspace}
\providecommand{\MR}{\relax\ifhmode\unskip\space\fi MR }
\providecommand{\MRhref}[2]{%
  \href{http://www.ams.org/mathscinet-getitem?mr=#1}{#2}
}
\providecommand{\href}[2]{#2}

\end{document}